\newtheorem{theorem}{Theorem}[section]
\newtheorem{lemma}[theorem]{Lemma}
\newtheorem{corollary}[theorem]{Corollary}
\theoremstyle{definition}
\theoremstyle{remark}
\newtheorem{remark}[theorem]{Remark}
\newtheorem{example}[theorem]{Example}
\numberwithin{equation}{section}
\newcommand{\where}{\mathop{\ |\ }\nolimits}
\DeclareMathOperator{\dist}{dist}
\renewcommand{\epsilon}{\varepsilon}
\renewcommand{\phi}{\varphi}
\newcounter{fig}
\title{Envy-free division using mapping degree}
\author{Sergey Avvakumov{$^\spadesuit$}}
\author{Roman~Karasev{$^\clubsuit$}}
\thanks{{$^\spadesuit$} Has received funding from the Austrian Science Fund (FWF), Project P31312-N35, and the European Research Council under the European Union's Seventh Framework Programme ERC Grant agreement ERC StG 716424 -- CASe}
\thanks{{$^\clubsuit$} Supported by the Federal professorship program grant 1.456.2016/1.4 and the Russian Foundation for Basic Research grants 18-01-00036 and 19-01-00169}
\address{Sergey~Avvakumov, Department of Mathematical Sciences, University of Copenhagen, Universitetspark 5, 2100 Copenhagen, Denmark}
\email{savvakumov@gmail.com}
\address{Roman~Karasev, Moscow Institute of Physics and Technology, Institutskiy per. 9, Dolgoprudny, Russia 141700\newline \indent
Institute for Information Transmission Problems RAS, Bolshoy Karetny per. 19, Moscow, Russia 127994}
\email{r\_n\_karasev@mail.ru}
\urladdr{http://www.rkarasev.ru/en/}
\subjclass[2010]{51F99, 52C35, 55M20, 55M35}
\keywords{Fair partitions, Envy-free divisions, Knaster--Kuratowski--Mazurkiewicz theorem, Equivariant mapping degree}
\begin{document}

\begin{abstract}
In this paper we study envy-free division problems. The classical approach to such problems, used by David Gale, reduces to considering continuous maps of a simplex to itself and finding sufficient conditions for this map to hit the center of the simplex. The mere continuity of the map is not sufficient for reaching such a conclusion. Classically, one makes additional assumptions on the behavior of the map on the boundary of the simplex (for example, in the Knaster--Kuratowski--Mazurkiewicz and the Gale theorem). 

We follow Erel Segal-Halevi, Fr\'ed\'eric Meunier, and Shira Zerbib, and replace the boundary condition by another assumption, which has the meaning in economy as the possibility for a player to prefer an empty part in the segment partition problem. We solve the problem positively when $n$, the number of players that divide the segment, is a prime power, and we provide counterexamples for every $n$ which is not a prime power. We also provide counterexamples relevant to a wider class of fair or envy-free division problems when $n$ is odd and not a prime power.

\emph{In this arxiv version that appears after the official publication we have corrected the statement and the proof of Lemma 3.4.}
\end{abstract}

\maketitle

\section{Introduction}

In this paper we study a classical game theoretic problem: $n$ players want to divide a resource among themselves. Is it always possible to do so in a fair, in some sense, way?

We consider a simple case, when the resource is the line segment $[0,1]$, and allow its partitions into $n$ closed (possibly degenerating to empty) segments with pairwise disjoint interiors. For each partition of $[0,1]$, each of the players would be satisfied to take one of the partition pieces, the choice of a player need not be unique. As a simple example, every player may rate the pieces with her/his own integrable ``value'' function $f_i$ on $[0,1]$, and prefer any of those partition segments that maximize the value of the integral of $f_i$ over the partition segment. However, we do not assume that the players have such ``value'' functions; in fact, they may rate the segments of the partition using an arbitrary complicated logic or no logic at all.

A partition (of the segment) is called \emph{envy-free}, if the players can be matched with the pieces of the partition so that each player is satisfied with the matching piece. Following Gale \cite{gale1984} and other classical results, we also make a natural ``continuity'' assumption: A player prefers the $i$th piece if in another, but arbitrarily close to the given partition configuration, she/he also prefers the $i$th piece.

One may additionally assume that the players are never satisfied with the empty pieces of any partition, this is the so called ``something is better than nothing'' assumption. In other words, any player prefers nonempty parts over empty parts. Assuming that ``something is better than nothing'', the existence of an envy-free segment partition is guaranteed by Gale's theorem (see Theorem \ref{theorem:gale} below for the precise statement).

Without the ``something is better than nothing'' assumption the situation becomes somewhat more complicated. In terms of the original economy problem, we may be considering the resource which comes with an additional cost. For some partitions, the cost of every nonempty piece might exceed its value, in which case a player might prefer to take an empty piece instead.

For the segment partitioning problem without the ``something is better than nothing'' assumption, in \cite{segal2018} it was proved that envy-free segment partitions exist for $n=3$ (the case $n=2$ is an easy exercise). In \cite{meunier-zerbib2018} the result was extended to $n=4$, or any prime $n$. In this work we give a complete solution to the same problem: \textit{If $n$ is a prime power then an envy-free segment partitioning with the possibility to choose the empty part always exists (equivalent to Theorem~\ref{theorem:prime-power}). Conversely, for every $n$ which is not a prime power, there exists an instance of this problem with no solution (equivalent to Theorem~\ref{theorem:n-even}).}

We need some preparations and setting up the notation in order to give mathematically precise statements of our results; in the introduction we only give informal statements. The rest of the paper is organized as follows. In Section \ref{section:classical} we outline the classical results and reductions of the envy-free division problems to precise mathematical questions. We start from the mapping version of the Knaster--Kuratowski--Mazurkiewicz theorem, Theorem \ref{theorem:kkm}, and then proceed to Gale's theorem, Theorem~\ref{theorem:gale}. In Section \ref{section:easy} we review some easy results that prepare the reader to understanding the substantially new results in subsequent sections. 

For classical results in Section \ref{section:classical} and for new results in Section \ref{section:segment} we emphasize that the natural way to handle the envy-free segment partition problem is to analyze necessary and sufficient conditions that a continuous map of a simplex to itself hits its center; which amounts to determining possible mapping degrees of maps between spheres under some additional assumptions, analogous to \emph{equivariance} with respect to a group action. 

In Section \ref{section:borsuk-ulam} we show another fundamental result: \textit{For $n$ odd and not a prime power, there is no Borsuk--Ulam theorem for equivariant maps from a Hausdorff compactum $X$ with a free action of the permutation group $\mathfrak S_n$ to $\mathbb R^n$ with the permutation action of $\mathfrak S_n$.} This result is not related to the original segment envy-free division problem, but it prevents using some of the well-known general techniques for other envy-free division or fair partition problems. Its analogues and their consequences are developed in \cite{avku2019,aks2019}.

%For classical results in Section \ref{section:classical} and for new results in Section \ref{section:pseudo-equivariant} we emphasize that the natural way to handle the envy-free partition problem is to analyze necessary and sufficient conditions when a continuous map of a simplex to itself hits its center, which amount to determining possible mapping degrees of certain maps between spheres under certain additional assumptions, which are similar, but not identical to \emph{equivariance} with respect to a group action. In Section \ref{section:borsuk-ulam} we show that there is no Borsuk--Ulam theorem for certain equivariant maps, for $n$ odd and not a prime power, which prevents using certain general techniques for envy-free partition or fair partition problems.

\subsection*{Acknowledgments}
The authors thank Shira~Zerbib, Fr\'ed\'eric~Meunier, Alfredo~Hubard, Oleg~Musin, Arkadiy~Skopenkov, Peter~Landweber, Pavle Blagojevi\'c, and the unknown referee for useful remarks and corrections to the text.

\section{Classical KKM-type results and partition problems}
\label{section:classical}

We recall some classical results around the Knaster--Kuratowski--Mazurkiewicz theorem~\cite{kkm1929} with modifications from \cite{gale1984,bapat1989}. Let $\Delta^{n-1}$ be the $(n-1)$-dimensional simplex, which we usually parametrize as
\[
\Delta^{n-1} = \left\{(t_1,\ldots,t_n)\in\mathbb R^n \where  t_1,\ldots, t_n \ge 0,\ t_1 + \dots + t_n = 1 \right\}.
\]
We also denote by $\Delta^{n-1}_i$ the facet of $\Delta^{n-1}$ given by the additional constraint $t_i=0$. Sometimes, when we know the dimension $n$, we will denote the simplex by $\Delta$ and its facets by $\Delta_i$.

In the above notation the KKM theorem reads: \textit{Let $A_1,\ldots, A_n$ be closed subsets of $\Delta^{n-1}$, covering the simplex, such that for every $i=1,\ldots,n$ the intersection $\Delta^{n-1}_i\cap A_i$ is empty. Then the intersection $A_1\cap A_2\cap \dots \cap A_n$ is not empty.} We will also use the KKM theorem in the mapping form:

\begin{theorem}[The mapping KKM theorem]
\label{theorem:kkm}
Assume $f : \Delta^{n-1} \to \Delta^{n-1}$ is a continuous map such that for all $i$ we have $f(\Delta^{n-1}_i) \subset \Delta^{n-1}_i$. Then $f$ is surjective.
\end{theorem}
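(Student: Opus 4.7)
The plan is to reduce Theorem \ref{theorem:kkm} directly to the set-theoretic KKM theorem stated in the paragraph immediately preceding it. Specifically, for every interior target point $y\in\Delta^{n-1}$, I would manufacture a KKM-style closed covering of $\Delta^{n-1}$ whose nonempty total intersection automatically contains a preimage of $y$.

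To implement this, I would fix an interior point $y=(y_1,\ldots,y_n)\in\Delta^{n-1}$, so that $y_i>0$ for every $i$, and set
\[
A_i = \{\, x\in\Delta^{n-1} : f(x)_i \ge y_i\,\}, \qquad i=1,\ldots,n.
\]
Each $A_i$ is closed by continuity of $f$. The disjointness $A_i\cap\Delta^{n-1}_i=\emptyset$ would follow immediately from the facet-preservation hypothesis: for $x\in\Delta^{n-1}_i$ one has $f(x)\in\Delta^{n-1}_i$, hence $f(x)_i=0<y_i$. The covering condition $\bigcup_i A_i=\Delta^{n-1}$ would be a counting check: if some $x$ lay in no $A_i$, then $f(x)_i<y_i$ for every $i$, and summing would yield $1=\sum_i f(x)_i<\sum_i y_i=1$, a contradiction. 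Applying the KKM theorem, a common point $x_0\in\bigcap_i A_i$ would satisfy $f(x_0)_i\ge y_i$ for all $i$; since both $f(x_0)$ and $y$ have coordinate sum equal to $1$, every inequality must be an equality, i.e. $f(x_0)=y$.

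This would show that $f(\Delta^{n-1})$ contains the entire relative interior of the simplex. To conclude, I would invoke compactness: $f(\Delta^{n-1})$ is the continuous image of a compact set, hence closed, and a closed subset of $\Delta^{n-1}$ containing the dense interior must equal all of $\Delta^{n-1}$. I do not anticipate any real obstacle in this strategy; the only subtlety is that the KKM construction forces the strict inequalities $y_i>0$ (otherwise the disjointness check breaks down), which is precisely why the argument is run only for interior $y$ and then upgraded to full surjectivity by taking closures. A purely homotopical alternative also presents itself, namely showing that the straight-line homotopy $h_t(x)=(1-t)f(x)+tx$ preserves each facet by convexity of $\Delta^{n-1}_i$, so that $f|_{\partial\Delta^{n-1}}$ is homotopic to the identity and hence has degree $1$, obstructing any radial projection away from a missed interior point; but the KKM route above is shorter and more consonant with the surrounding discussion.
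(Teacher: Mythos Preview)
Your argument is correct. The sets $A_i=\{x:f(x)_i\ge y_i\}$ do satisfy the hypotheses of the classical KKM theorem exactly as you check, the common point is forced to be a preimage of $y$, and the closure step finishes the job. The straight-line homotopy alternative you mention is also fine.

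However, your route is genuinely different from the paper's. The paper proves the mapping KKM theorem \emph{independently} of the classical KKM theorem, via mapping degree: one approximates $f$ by a PL map preserving big faces, observes that $f(\partial\Delta)\subset\partial\Delta$ so that $\deg f$ is well defined and equals $\deg(f|_{\partial\Delta})$, and then argues by induction on dimension (each facet restriction again preserves its own facets, so has degree $1$) to conclude $\deg f=1$. Immediately afterwards the paper \emph{derives} the classical KKM theorem from this mapping version. Your proposal runs the implication in the opposite direction, taking classical KKM as given and deducing surjectivity.

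What the two approaches buy: your argument is shorter and entirely elementary once classical KKM is granted. The paper's degree-theoretic proof, on the other hand, (i) makes the pair of results self-contained rather than circular-looking (since the very next paragraph reduces classical KKM to the mapping version), and (ii) establishes the stronger conclusion $\deg f=1$, not just surjectivity. That degree viewpoint---computing and controlling $\deg f$ for maps $\Delta\to\Delta$ or $\partial\Delta\to\partial\Delta$ under boundary constraints---is the engine of the later proofs (Theorems~\ref{theorem:equivariant-gale} and~\ref{theorem:prime-power}), so the paper has good structural reasons for preferring it here.
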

\begin{proof}
Let us approximate $f$ with a PL map in order to treat the mapping degree geometrically. A PL map assumes a subdivision of $\Delta$, in order to refer to the faces of the original (tautological) triangulation of $\Delta$ we use the term \emph{big faces}.

We may assume that the approximating PL map has the same property that any big facet (and hence any big face of arbitrary dimension) is mapped to itself. Considering $\Delta$ as a PL manifold with boundary, we notice that $f$ takes boundary to the boundary. Therefore the mapping degree of $f$ is well defined and is equal to the mapping degree of its restriction $f|_{\partial \Delta} : \partial\Delta\to\partial\Delta$. This is clear either from the geometric definition of the mapping degree, or from the exact homology sequence of the pair $(\Delta, \partial\Delta)$ and action of $f_*$ on it, or from the Stokes theorem for differential forms in $\Omega^{n-1}(\Delta)$ (this may sound strange at this point, but we will essentially use differential forms in the proof of Theorem~\ref{theorem:prime-power} below).

Then we prove by induction on the dimension that the mapping degree of $f$ equals $1$. The case of dimension $n=1$ is clear, for the induction step we note that the restriction to a big facet, $f|_{\Delta_i} : \Delta_i\to\Delta_i$, satisfies the same assumptions (big faces go to themselves) and hence we conclude that the degree of $f|_{\Delta_i}$ equals $1$. From the geometric description of the degree of a PL map, this degree is the same as the degree of $f|_{\partial \Delta}$, which in turn equals the degree of $f$.
\end{proof}

\begin{proof}[Reduction of the classical KKM to its mapping version]
Replace $A_i$ with a continuous function $g_i : \Delta \to\mathbb R$, such that $g_i(A_i)=1$ and $g_i(x)=0$ for $x$ outside an $\epsilon$-neighborhood of $A_i$. When $\epsilon>0$ is sufficiently small, we will have $g_i(\Delta_i) = 0$ from the assumption $\Delta_i\cap A_i = \emptyset$.

Since the $A_1,\ldots,A_n$ cover the simplex, we conclude that $g_1(x) + \dots + g_n(x) \ge 1 > 0$ for every $x\in\Delta$. Dividing every $g_i$ by this sum, we obtain non-negative continuous functions $f_1,\ldots,f_n$ with unit sum everywhere in the simplex. Such $f_i$ are coordinates of a map
\[
f : \Delta \to \Delta,
\]
and the property $f_i(\Delta_i) = 0$ means that any facet goes to itself. Hence by the mapping KKM theorem $f$ is surjective and therefore there exists $x\in\Delta$ such that $f_i(x)=1/n$ for any $i$. Such a point $x$ is in the $\epsilon$-neighborhood of $A_i$ for every $i$. Passing to the limit as $\epsilon\to 0$ and using compactness of $\Delta$ we assume that $x$ tends to a point in $\Delta$. From the fact that all $A_i$ are closed we conclude that this limit point belongs to $A_1\cap \dots \cap A_n$ and show that this intersection is not empty.
\end{proof}

Now we proceed to the generalization of the KKM theorem, useful in proving existence of equilibria in  questions relevant to economy.

\begin{theorem}[Gale's theorem]
\label{theorem:gale}
Let $A_{ij}$ be closed subsets of $\Delta^{n-1}$, indexed by $i=1,\ldots,n$ and $j=1,\ldots,n$. Assume that for every fixed $j$ the family of sets $\{ A_{1j}, A_{2j},\ldots, A_{nj} \}$ covers the simplex, and $A_{ij}\cap \Delta^{n-1}_i$ is empty for every $i$ and $j$. Then there exists a permutation $\sigma$ of size $n$ such that the intersection $A_{1\sigma(1)}\cap A_{2\sigma(2)}\cap\dots\cap A_{n\sigma(n)}$ is not empty.
\end{theorem}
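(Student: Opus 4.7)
The plan is to reduce Gale's theorem to the mapping KKM theorem (Theorem~\ref{theorem:kkm}) by merging the $n$ column-coverings into a single map via averaging, and then extracting the desired permutation from the resulting doubly stochastic matrix by the Birkhoff--von Neumann theorem.

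First, for each fixed $j=1,\ldots,n$ and each sufficiently small $\epsilon>0$, I would run the partition-of-unity construction from the reduction of the classical KKM above, but applied to the covering $\{A_{1j},\ldots,A_{nj}\}$. This produces a continuous map $f^{(j)}_\epsilon : \Delta^{n-1} \to \Delta^{n-1}$ with $f^{(j)}_\epsilon(\Delta^{n-1}_i)\subset\Delta^{n-1}_i$ for every $i$, and with the property that the coordinate $f^{(j)}_{\epsilon,i}(x)$ can be positive only when $x$ lies in the $\epsilon$-neighborhood of $A_{ij}$. Then I form the average
\[
\Phi_\epsilon(x) \;=\; \frac{1}{n}\sum_{j=1}^n f^{(j)}_\epsilon(x),
\]
which again sends each facet $\Delta^{n-1}_i$ into itself, since a convex combination of maps with that property inherits it. Theorem~\ref{theorem:kkm} then gives surjectivity of $\Phi_\epsilon$, so there exists $x_\epsilon\in\Delta^{n-1}$ with $\Phi_\epsilon(x_\epsilon)$ equal to the barycenter $(1/n,\ldots,1/n)$.

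Next, consider the $n\times n$ matrix $M_\epsilon$ with entries $(M_\epsilon)_{ij}=f^{(j)}_{\epsilon,i}(x_\epsilon)$. Its $j$-th column sums to $1$ because $f^{(j)}_\epsilon(x_\epsilon)\in\Delta^{n-1}$, and its $i$-th row sums to $1$ precisely because $\Phi_\epsilon(x_\epsilon)$ is the barycenter; hence $M_\epsilon$ is doubly stochastic. By Birkhoff--von Neumann it is a convex combination of permutation matrices, so some permutation $\sigma_\epsilon$ of $[n]$ satisfies $f^{(j)}_{\epsilon,\sigma_\epsilon(j)}(x_\epsilon)>0$ for every $j$; equivalently, $x_\epsilon$ lies in the $\epsilon$-neighborhood of $A_{\sigma_\epsilon(j),j}$ for every $j$. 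Since $\mathfrak S_n$ is finite, after passing to a subsequence $\epsilon_k\to 0$ one may assume $\sigma_{\epsilon_k}=\sigma$ is constant and, by compactness of $\Delta^{n-1}$, that $x_{\epsilon_k}\to x^*$. Closedness of each $A_{ij}$ then forces $x^*\in A_{\sigma(j),j}$ for every $j$, equivalently $x^*\in A_{i,\sigma^{-1}(i)}$ for every $i$, so $\sigma^{-1}$ is the permutation required by the statement.

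The main obstacle, and really the only ingredient beyond what is already in the proof of the mapping KKM theorem, is identifying the right algebraic reformulation: averaging converts the geometric statement ``$\Phi_\epsilon$ hits the barycenter'' into the algebraic statement ``$M_\epsilon$ is doubly stochastic'', which Birkhoff--von Neumann then automatically converts into an honest permutation. The $\epsilon$-smoothing of the coverings and the compactness/closedness passage at the end are boilerplate directly mirroring the KKM reduction proved above.
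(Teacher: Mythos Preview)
Your proposal is correct and follows essentially the same approach as the paper: build partition-of-unity maps $f_{ij}$ from the coverings, average over $j$ to obtain a single KKM-type map, apply Theorem~\ref{theorem:kkm} to hit the barycenter, interpret the resulting matrix $(f_{ij}(x))$ as doubly stochastic, invoke Birkhoff--von~Neumann to extract a permutation, and pass to the limit using compactness and closedness. The only cosmetic difference is that the paper indexes the permutation as $f_{i\sigma(i)}(x)>0$ directly, whereas you obtain $\sigma^{-1}$ at the end; otherwise the arguments coincide.
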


\begin{proof}
We essentially reproduce the (sketch of the) proof in \cite[Proof of the lemma on page 63]{gale1984}, giving more details. Replace each set $A_{ij}$ by a function $g_{ij}$. Using the covering assumption, we may normalize $g_{ij}$ to obtain $f_{ij}$ such that
\[
f_{1j} + \dots + f_{nj} = 1
\]
at any point of the simplex and any $j$, and also $f_{ij}(\Delta_i)=0$. Now introduce non-negative functions
\[
h_i = \frac{f_{i1} + \dots + f_{in}}{n},
\]
which still satisfy $h_1 + \dots + h_n = 1$ everywhere in the simplex, and $h_i(\Delta_i)=0$. Hence there appears a continuous map $h : \Delta\to\Delta$ sending each facet to itself and by the mapping KKM theorem we conclude that there exists $x\in \Delta$ such that $h_i(x) = 1/n$ for every $i$.

Evaluating our original matrix of functions $f_{ij}$ at the point $x$, we conclude that 
\[
\sum_i f_{ij}(x) = 1,\quad \sum_j f_{ij}(x) = 1.
\]
The matrix $(f_{ij}(x))$ is doubly stochastic and the Birkhoff--von Neumann theorem \cite{birkhoff1946} (see also the textbook \cite[pages 56--58]{barvinok2002}) asserts that this matrix is a convex combination of permutation matrices. In particular, there exists a permutation $\sigma$ such that $f_{i\sigma(i)}(x) > 0$ for every $i$. Alternatively, this can also be deduced with a little effort from Hall's marriage theorem \cite[Theorem~1]{hall1935}. Going to the limit and using the compactness and closeness again, we obtain that $A_{1\sigma(1)}\cap\dots\cap A_{n\sigma(n)}\neq\emptyset$.
\end{proof}

For far-reaching generalizations of these theorems, see for example \cite[Theorem 3.1]{musin2017}, which provides a Gale-type theorem corresponding to homotopy classes of maps from topological spaces to spheres, of which the degree of a map between spheres of equal dimensions is a particular case.

The meaning of Gale's theorem in economy can be illustrated as follows. The simplex $\Delta^{n-1}$ (sometimes) parametrizes partitions of a resource into $n$ parts. The set $A_{ij}$ corresponds to the partitions where the player $j$ would be satisfied to take the $i$th part of the resource and leave the rest to the other players. The other assumptions of the theorem mean that in every partition every player would be satisfied with some part, and nobody will be satisfied to take the empty part with $t_i=0$. The conclusion of the theorem then means that there exists a partition and an assignment $\sigma$ of the parts to the players such that every player will be satisfied.

The basic case that we mostly study below is when the simplex $\Delta^{n-1}$ parametrizes partitions of a segment $[0,1]$ into parts
\[
[0, t_1], [t_1, t_1 + t_2],\dots, [t_1+\dots + t_{n-1}, 1].
\]
The facet $\Delta_i$ then corresponds to the situation when $t_i=0$ and hence the $i$th partition segment degenerates to one point. We will identify such a one point segment with the empty set in the subsequent sections.

\section{When some players may choose nothing}
\label{section:easy}

\subsection{Assume that some parts may be dropped}

What happens when $A_{ij}\cap \Delta_i$ is non-empty in Gale's theorem, or, in terms of the envy-free segment partition problem, if some players sometimes prefer to take nothing from the resource partition? This question was left as an exercise to the reader in \cite[middle of page 3]{meunier-zerbib2018}, let us perform this exercise here.

We may obtain a result about this by adjusting the situation to the assumption of Gale's theorem. Let us remove from $A_{ij}$ the part where $t_i < \epsilon$. This will satisfy the assumption $A_{ij}\cap \Delta_i=\emptyset$ of Gale's theorem, but will break the assumption that $\{A_{ij}\}_{i=1}^n$ cover the simplex for every $j$.

In order to fix the covering assumption, given $j$, let us add $t\in \Delta$, which did not belong to any $A_{ij}$, to $A_{i_{max}j}$ where $t_{i_{max}}$ is a maximal coordinate of the point $t$, there may be several maximal coordinates. Such a modification of $A_{ij}$ keeps the assumption that the coordinate $t_i$ is no smaller than $\epsilon$ on $A_{ij}$.

Now apply Gale's theorem to the modified sets to obtain a permutation $\sigma$ and a point $x_\epsilon\in \bigcap A_{i\sigma(i)}$. If all the coordinates of $x_\epsilon$ are greater than $\epsilon$ then we are in the range where we did not modify anything and the problem is solved. 

Otherwise there exist coordinates of $x_\epsilon$ that are at most $\epsilon$. In this case we are going to the limit $\epsilon\to+0$, from the compactness we may assume that $x_\epsilon\to x$ and the permutation is all the time the same. In the coordinates $x_1,\ldots,x_n$ of the limit configuration some coordinates $x_i$ will then be zero, otherwise we are in the first case. 

In this limit configuration, speaking in terms of the envy-free segment partition problem, some player $j=\sigma(i)$ may be dissatisfied with the assignment of the part $i$ to her/him. But this may only happen in the situation when this player preferred parts with some $t_{i'} < \epsilon$ in the neighborhood of $x$, we may assume $i'$ fixed here. By the closedness of the preference set $A_{i'j}$ we obtain that $x_{i'}=0$ for the limit point $x$ and that the player $j$ does prefer the emptyset in the partition $x$.  

Now we conclude:

\begin{corollary}
Under the assumptions of Gale's theorem, modified so that some players may sometimes prefer nothing, it is possible to find a partition, assign some parts to the players, drop some unwanted parts, and assign nothing to some of the players, so that all players will be satisfied.
\end{corollary}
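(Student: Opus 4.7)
The plan is to reduce to Gale's theorem (Theorem~\ref{theorem:gale}) by a compactness argument with a vanishing parameter $\epsilon>0$. For each $\epsilon$ I would truncate the preference sets to $A_{ij}^{\epsilon}:=A_{ij}\setminus\{t\in\Delta:t_i<\epsilon\}$; these are closed and automatically satisfy $A_{ij}^{\epsilon}\cap\Delta_i=\emptyset$. Truncation may break the covering condition for a fixed $j$, so I would repair it by adjoining every uncovered $t$ to $A_{i_{\max}(t)j}^{\epsilon}$, where $i_{\max}(t)$ realizes $\max_k t_k$. Since $\max_k t_k\ge 1/n$, this patch can be implemented in a closed way (e.g.\ by unioning in the closed argmax cell $M_i=\{t:t_i=\max_k t_k\}$ restricted to the uncovered region) and still keep $A_{ij}^{\epsilon}\cap\Delta_i=\emptyset$ whenever $\epsilon<1/n$.

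Gale's theorem then yields, for each $\epsilon$, a permutation $\sigma_\epsilon$ and a point $x_\epsilon$ in the intersection along $\sigma_\epsilon$. Sending $\epsilon=\epsilon_k\to 0$, I use the finiteness of $\mathfrak S_n$ and compactness of $\Delta$ to pass to a subsequence along which $\sigma_{\epsilon_k}\equiv\sigma$ is constant and $x_{\epsilon_k}\to x$. If every coordinate $x_i$ is positive, then for each $i$ eventually $x_{\epsilon_k,i}\ge\epsilon_k$, so the patch was not triggered in coordinate $i$ and closedness of the original $A_{i\sigma(i)}$ gives $x\in\bigcap_i A_{i\sigma(i)}$, recovering Gale's conclusion outright.

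Otherwise some coordinates $x_i$ vanish, and for each such $i$ I argue that player $j=\sigma(i)$ is content to take an empty part. Indeed, if the assignment at $x$ fails for player $j$ then $x_{\epsilon_k}$ was placed into $A_{i\sigma(i)}^{\epsilon}$ only by the patch, so by the original covering of $\{A_{i'j}\}_{i'}$ there is $i'=i'(j)$ with $x_{\epsilon_k}\in A_{i'j}$ but $x_{\epsilon_k,i'}<\epsilon_k$ (otherwise the point would already have been covered before patching). A further pigeonhole makes $i'$ constant along the subsequence, and closedness of $A_{i'j}$ yields $x\in A_{i'j}$ with $x_{i'}=0$, so player $j$ is indeed satisfied with the empty part $i'$. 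We may therefore discard the empty parts and leave these players unmatched, obtaining the envy-free outcome described in the corollary. The main obstacle I foresee is this boundary bookkeeping: implementing the covering patch in a closed way without creating spurious facet incidences, and ensuring the witness indices $i'(j)$ can be chosen consistently via pigeonhole---both resting only on closedness of the $A_{ij}$ and compactness of $\Delta$ in the limit $\epsilon_k\to 0$.
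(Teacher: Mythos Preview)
Your proposal is essentially the paper's own argument: truncate each $A_{ij}$ by removing $\{t_i<\epsilon\}$, repair the covering by sending uncovered points to the argmax index, apply Gale's theorem, and pass to the limit using compactness and closedness. One small wording issue: in your ``otherwise'' case you restrict attention to indices $i$ with $x_i=0$, but a player $j=\sigma(i)$ can be dissatisfied even when $x_i>0$ (the patch may have placed $x_{\epsilon_k}$ into $A^{\epsilon_k}_{i\sigma(i)}$ with $i$ the argmax); your subsequent sentence ``if the assignment at $x$ fails for player $j$'' is the correct hypothesis, and the argument you give there handles all dissatisfied players regardless of whether $x_i=0$.
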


\subsection{General observations when no part may be dropped}

In our argument it is crucial that whenever the player is satisfied with the part $i$ such that $t_i=0$, he/she will also be satisfied with any other part $i'$ such that $t_{i'}=0$. In other words, there is only one sort of ``nothing''.

Now we return to the setting when it is not allowed to drop parts in a partition. Let us explain why any problem of KKM--Gale type is roughly equivalent to the study of continuous maps $f : \Delta^{n-1}\to\Delta^{n-1}$. We will always use the covering assumption, in terms of the envy-free segment partition problem, in every partition any player is satisfied with some of the parts.

In one direction, we start from the preference sets $A_{ij}$ and pass to functions $f_{ij}$, as in the proof of Theorem~\ref{theorem:gale} above. If certain assumptions on $A_{ij}$ imply certain other assumptions on $f_{ij}$ that, in turn, allow us to conclude that the map hits the center of the simplex, then we are done by essentially the same argument.

In the other direction, having a continuous map $f : \Delta^{n-1}\to \Delta^{n-1}$, we put 
\[
A_{ij} = \left\{t\in \Delta^{n-1} \where \forall i'\ f_i(t)\ge f_{i'}(t) \right\}.
\]
This definition does not depend on $j$, that is the players have precisely the same preference, hence we put $A_i = A_{ij}$. The family of closed sets $A_1,\ldots, A_n$ covers the simplex. Note that in the case, when all the players have the same preference, the setting of Gale's theorem degenerates to the setting of the KKM theorem. Now we observe that the $A_i$ have a common point if and only if
\[
f_1(t) = \dots = f_n(t) = \frac{1}{n}
\] 
for some $t$.

Since it is easy to build a continuous map $f:\Delta^{n-1}\to\Delta^{n-1}$ missing the center of the simplex, it is now clear that in order to have a Gale-type theorem, we need some assumption like ``no player is satisfied with an empty part''. Here we give a very explicit example:

\begin{example}
One may ask if it is sufficient to have the assumption ``if somebody prefers nothing then he/she does not care on which position this nothing occurs'' and prove a KKM--Gale-type theorem, without using any equivariance assumptions or other similar assumptions. This is not the case already for the KKM theorem. Take the triangle $\Delta^2$ and put
\[
A_1=\Delta^2,\quad A_2=\{t_1=t_2=0\},\quad  A_3=\{t_1=t_3=0\}.
\]
In terms of the envy-free segment partition problem, in all cases the player prefers part $1$. When parts $1$ and $2$ are empty, the player also prefers part $2$. When parts $1$ and $3$ are empty, the player also prefers part $3$. But there is no configuration where the player prefers all three parts; or in case of Gale's theorem, where the preferences of three identical players are met.
\end{example}

\subsection{Using permutation equivariance}

One possible way is to introduce an assumption of ``equivariance on the boundary'' with respect to the action of the permutation group $\mathfrak S_n$ on the simplex $\Delta^{n-1}$ by permuting the coordinates. For example, in Gale's theorem we may require, for every $i,j=1,\ldots,n$ and any permutation $\sigma$,
\[
\sigma A_{ij}\cap \partial \Delta^{n-1}= A_{\sigma(i)j} \cap \partial \Delta^{n-1}.
\]

In terms of the envy-free segment partition problem, this means that when a partition has empty parts (the boundary of the simplex) and the parts of a partition are permuted, then the players trace the parts they prefer and continue preferring them. When a partition has $n$ non-empty parts, then the players may take the order into account. Perhaps, the formulation here is not very natural from the point of view of economy, but it may serve to us as a mathematically natural example, which we can handle. Here we give a positive result for this setting:

\begin{theorem}
\label{theorem:equivariant-gale}
The KKM theorem and Gale's theorem are valid when it is allowed to choose empty parts if we impose the ``equivariance on the boundary'' assumption and also assume that $n$ is a prime power.
\end{theorem}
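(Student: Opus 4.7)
My plan is to follow the proof of Theorem~\ref{theorem:gale} almost verbatim. First, replace the sets $A_{ij}$ by smoothings $g_{ij}$, chosen in an $\mathfrak S_n$-compatible way near $\partial\Delta$ (possible because the equivariance hypothesis involves only points of $\partial\Delta$, so an equivariant collar of $\partial\Delta$ in $\Delta$ lets us symmetrise the bump functions there). Normalise by the covering assumption to functions $f_{ij}$ with $\sum_i f_{ij}(x)=1$ for every $x$ and every $j$, and form the averages $h_i=\tfrac1n\sum_j f_{ij}$. This yields a continuous $h:\Delta\to\Delta$, and as in the Gale argument it is enough to find $x$ with $h(x)=c:=(\tfrac1n,\dots,\tfrac1n)$, after which the Birkhoff--von Neumann step closes the proof. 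The boundary equivariance of the $A_{ij}$ becomes $f_{\sigma(i)j}(\sigma x)=f_{ij}(x)$ on $\partial\Delta$, and summing over $j$ gives $h(\sigma x)=\sigma h(x)$ for $x\in\partial\Delta$; i.e., $h|_{\partial\Delta}$ is $\mathfrak S_n$-equivariant for the permutation action.

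\textbf{From hitting the centre to a sphere problem.} Next I would suppose, for a contradiction, that $h$ avoids $c$. Radial projection from $c$ gives a continuous $H:\Delta\to\partial\Delta$; both the radial projection and $h|_{\partial\Delta}$ are $\mathfrak S_n$-equivariant, so $H|_{\partial\Delta}$ is an equivariant self-map of $\partial\Delta$, and it is null-homotopic since it extends across $\Delta$. Identifying $\partial\Delta$ with the unit sphere $S(V)$ of the reduced permutation representation $V=\{y\in\mathbb R^n:y_1+\dots+y_n=0\}$, the theorem reduces to the equivariant statement: \emph{for $n$ a prime power $p^k$, no $\mathfrak S_n$-equivariant self-map of $S(V)$ is null-homotopic.}

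\textbf{Equivariant step.} For this I would restrict the action to the elementary abelian $p$-subgroup $G=(\mathbb Z/p)^k\hookrightarrow\mathfrak S_{p^k}$ sitting inside by the regular action of $G$ on itself, and analyse the mod-$p$ Euler class $e(V)\in H^*(BG;\mathbb F_p)$. The permutation representation of $G$ on $\mathbb R^n$ is the regular representation, so over $\mathbb F_p$ the representation $V$ contains every non-trivial character of $G$ exactly once. Consequently $e(V)$ equals the product of all $p^k-1$ non-zero linear forms in the polynomial generators $x_1,\dots,x_k$ of $H^{\mathrm{ev}}(BG;\mathbb F_p)$, a classical Dickson-type invariant, which is non-zero. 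If the identity of $S(V)$ extended equivariantly over the disk $D(V)$, then naturality of the Thom/Euler class under the Borel construction would force $e(V)=0$, a contradiction.

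\textbf{Main obstacle.} The entire content is in this last equivariant step, and it is where the prime-power hypothesis enters in an essential way. A concise substitute for the Euler-class computation is tom~Dieck's classical theorem that for a finite $p$-group $G$ and a $G$-representation $V$ with $V^G=0$ every $G$-equivariant self-map of $S(V)$ has degree coprime to $p$, hence is not null-homotopic; this applies with $G$ any Sylow $p$-subgroup of $\mathfrak S_{p^k}$, since such a Sylow acts transitively on $\{1,\dots,p^k\}$ and so has $V^G=0$. The other steps are the formal Gale--KKM reduction recorded in the proofs of Theorems~\ref{theorem:kkm} and~\ref{theorem:gale}, and a small technical check that the smoothing can be performed so as to preserve the boundary equivariance.
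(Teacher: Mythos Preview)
Your argument is correct and shares the same overall reduction with the paper: pass from covers to a continuous map $h:\Delta\to\Delta$ that is $\mathfrak S_n$-equivariant on $\partial\Delta$, and reduce to showing that any $\mathfrak S_n$-equivariant self-map of the boundary sphere has nonzero degree. The paper, however, establishes this last fact by a direct geometric degree count rather than by restriction to a $p$-subgroup. It connects the given boundary map to the identity by an equivariant homotopy $h:\partial\Delta\times[0,1]\to W_n$, perturbs $h$ via Lemma~\ref{lemma:finite-to-one} so that $h^{-1}(0)$ is finite, and then observes that the local degree is constant along each $\mathfrak S_n$-orbit (a permutation acts by its sign on the orientations of both domain and range) while every orbit size is a multinomial coefficient $\binom{n}{k_1\ \cdots\ k_\ell}$, divisible by $p$ by Lucas's theorem when $n=p^\alpha$. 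Hence the degree of any equivariant boundary map is $1\pmod p$.

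Your route via a Sylow or elementary abelian $p$-subgroup $G\subset\mathfrak S_{p^k}$ with $V^G=0$, and then either the nonvanishing of the mod-$p$ Euler class or the tom~Dieck-type congruence $\deg\equiv\pm1\pmod p$, is the standard equivariant-topology proof; the paper explicitly acknowledges that the theorem follows from such known results (citing Marzantowicz and Matou\v{s}ek). The reason the paper nonetheless writes out the elementary orbit-counting argument is that the same template---perturb to a finite preimage, group the preimage points into classes, and show each class contributes a multiple of $p$ to the degree change---is exactly what drives the substantially harder Theorem~\ref{theorem:prime-power}, where the symmetry is only \emph{pseudo}-equivariance and there is no genuine group action to restrict to. Your approach is cleaner and more conceptual for the present theorem, but it does not generalise to that setting; the paper's hands-on method is chosen precisely because it does.
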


This theorem follows from well-known results on degrees of equivariant maps between spheres, see for example \cite{marzantowicz1989} or the textbook \cite[Sketch of the proof of Theorem 6.2.5]{matousek2003using}. The technique of the latter reference shows that the homological trace of a $G$- equivariant map $f : S^{n-1}\to S^{n-1}$ is divisible by a prime $p$ (and hence its degree is $\pm 1$ modulo $p$) if all the $G$-orbits in $S^{n-1}$ have size divisible by $p$. Here we provide a similar explicit argument proving this theorem, because we will use modifications of this argument to establish further results. In particular, Theorem~\ref{theorem:n-odd} asserts that dropping the assumption that $n$ is a prime power, at least for odd $n$, leads to an opposite conclusion.

\begin{lemma}
\label{lemma:finite-to-one}
Assume $G$ is a finite group acting on a polyhedron $P$ and acting linearly on a vector space $V$. Assume that for any subgroup $H\subseteq G$ the inequality $\dim P^H \le \dim V^H$ holds for the subspaces of $H$-fixed points. Then for any $G$-invariant triangulation of $P$ its second barycentric subdivision has the following property: The set of $G$-equivariant PL maps $f : P \to V$, linear on faces of the second barycentric subdivision, has an open dense subset consisting of maps with finite fibers $f^{-1}(y)$ for any $y\in V$. 
\end{lemma}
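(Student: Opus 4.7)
The plan is to regard $M$, the space of $G$-equivariant PL maps $P \to V$ linear on faces of the second barycentric subdivision $T$, as a finite-dimensional real vector space isomorphic to $\bigoplus_{[v]} V^{G_v}$, where the sum runs over $G$-orbits of vertices and $G_v$ is the stabilizer of a representative. The map $f$ has finite fibers if and only if its restriction to every closed simplex of $T$ is an affine injection, equivalently the images of the vertices of that simplex are affinely independent in $V$. For each simplex $\tau$ the subset $Z_\tau \subset M$ of maps violating this on $\tau$ is the common zero locus of certain polynomial minors in the coordinates of $M$, hence Zariski-closed. It therefore suffices to produce for each $\tau$ a single $f \in M$ with $f|_\tau$ injective: the finitely many proper Zariski-closed loci $Z_\tau$ then have nowhere-dense union, and its complement is the desired open dense subset.

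The core of the argument is this existence step, and it hinges on a structural property of the second barycentric subdivision. A simplex $\tau$ of $T$ corresponds to a strictly increasing chain $\sigma_0 \subsetneq \sigma_1 \subsetneq \cdots \subsetneq \sigma_k$ of simplices of the first barycentric subdivision, with the vertices of $\tau$ being the barycenters $\hat\sigma_i$. Because the set-wise stabilizer of any simplex in a barycentric subdivision fixes that simplex pointwise, the vertex stabilizer of $\hat\sigma_i$ equals $H_i := G_{\sigma_i}$, and any element fixing $\sigma_{i+1}$ pointwise a fortiori fixes the face $\sigma_i \subset \sigma_{i+1}$ pointwise, yielding a decreasing chain of subgroups
\[
H_0 \supseteq H_1 \supseteq \cdots \supseteq H_k
\]
and correspondingly an increasing chain of fixed subspaces $V^{H_0} \subseteq V^{H_1} \subseteq \cdots \subseteq V^{H_k}$. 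Since $\dim \sigma_i \geq i$ (dimensions strictly grow along a chain) and $\sigma_i \subseteq P^{H_i}$, the hypothesis of the lemma gives
\[
\dim V^{H_i} \;\geq\; \dim P^{H_i} \;\geq\; \dim \sigma_i \;\geq\; i.
\]

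From this I build $f$ inductively. Set $x_0 := f(\hat\sigma_0) \in V^{H_0}$ arbitrarily, and for $i \geq 1$ pick $x_i := f(\hat\sigma_i) \in V^{H_i}$ outside the affine hull of $x_0, \dots, x_{i-1}$; such $x_i$ exists because the linear subspace $V^{H_i}$, of dimension at least $i$, cannot be contained in any affine subspace of dimension $\leq i-1$, so $(x_0,\dots,x_i)$ remains affinely independent. The vertices $\hat\sigma_0, \dots, \hat\sigma_k$ lie in pairwise distinct $G$-orbits, since the $\sigma_i$ have pairwise distinct dimensions and no $g \in G$ can send $\sigma_i$ to $\sigma_j$ for $i \neq j$, so these $k+1$ values can be assigned independently and supplemented by $f(v) := 0$ on representatives of all remaining orbits and extended equivariantly. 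The resulting $f \in M$ has $f|_\tau$ affinely injective. The main obstacle is exactly this inductive existence, which critically uses the chain structure of the vertex stabilizers provided only by the \emph{second} — not the first — barycentric subdivision; once it is in place, the finitely many proper Zariski-closed subsets $Z_\tau$ have nowhere-dense union and the lemma follows.
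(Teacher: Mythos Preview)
Your proof is correct and follows essentially the same approach as the paper: both hinge on the fact that after one barycentric subdivision the setwise and pointwise stabilizers of every face coincide, derive the bound $\dim V^{H_i}\ge \dim P^{H_i}\ge \dim\sigma_i\ge i$ from $\sigma_i\subseteq P^{H_i}$, and then argue that generic choices of vertex values in the appropriate fixed subspaces keep the images of each chain affinely independent. Your per-simplex Zariski-closed bad-locus framing is a slightly more explicit packaging of the paper's global inductive ``generic choice'' argument, but the mathematical content is the same.
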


\begin{proof}
Let us first make one barycentric subdivision. The vertices of the barycentric subdivision are marked by the dimension of the faces they originate from and those marks are preserved by $G$. Hence the action of $G$ has the following property: \emph{For any $g\in G$ and any face $\phi$ we have $g(\phi) = \phi$ if and only if $g$ is the identity map on $\phi$.} We now assume that the triangulation of $P$ has this property. 

Now consider $G$-equivariant maps, linear on faces of the barycentric subdivision $P'$ (this may be the second barycentric subdivision we make). We show that a dense open subset of such maps (that is a \emph{generic map} of this kind) has the required property. Such a map $f : P \to V$ is defined whenever we define it equivariantly on vertices of the subdivision $P'$, and we argue by induction on the poset of the vertices of $P'$, which is the same as the poset of faces of $P$.

Assume we have a vertex $\phi\in P'$ and consider possible values $f(\phi)$. Let $H$ be the stabilizer of $\phi$ as a vertex of $P'$, by our assumption in the beginning ofthe proof this is also the stabilizer of every point of $\phi$ as a face of $P$. The value $f(\phi)$ must be chosen in $V^H$ and $f(\phi)\in V^H$ is the only constraint needed to extend $f$ to the orbit $G\phi$ equivariantly. For any face of $P'$, given by a chain of vertices of $P'$
\[
\phi_1 < \phi_2 < \dots < \phi_k < \phi
\]
of faces of $P$, we assume by induction that $f(\phi_1), \ldots, f(\phi_k)$ are affinely independent and form a $(k-1)$-dimensional simplex in $V$ (otherwise $f$ is not finite-to-one on $\phi_k$). 

The dimension assumption in the statement of the lemma means that $k \le \dim \phi \le \dim V^H$ (speaking of dimension, we consider $\phi$ as a face of $P$ and note that $\phi\subseteq P^H$), hence for a generic choice of $f(\phi)\in V^H$ the points $f(\phi_1), \ldots, f(\phi_k), f(\phi)$ are affinely independent. This applies to all chains that end in $\phi$ and completes the induction step and the proof is complete.
\end{proof}

\begin{proof}[Proof of Theorem \ref{theorem:equivariant-gale}]
Consider any $\mathfrak S_n$ equivariant map $\partial \Delta^{n-1} \to \partial \Delta^{n-1}$ and compose it with the inclusion $\partial \Delta^{n-1}\subset W_n$ into the affine span of $\Delta^{n-1}$ to obtain a $\mathfrak S_n$-equivariant map
\[
f_1 : \partial \Delta^{n-1} \to W_n.
\]
Let $f_0 : \partial \Delta^{n-1}\to W_n$ be the standard $\mathfrak S_n$-equivariant inclusion. Connect them by an equivariant homotopy 
\[
h : \partial \Delta^{n-1}\times [0,1] \to W_n,
\]
which can be chosen as $h(x,t) = (1-t) f_0(x) + t f_1(x)$. 

Note that the difference in the degrees of $f_0$ and $f_1$ as maps of $\partial \Delta^{n-1}$ to itself equals the degree of $h$ over the center $c\in \Delta^{n-1}$, which may be considered as the origin $0\in W_n$. This follows from the fact that the degree of a map between closed connected oriented manifolds with boundary $h : M\to N$ satisfying $h(\partial M)\subset \partial N$ is well defined and equals the degree of the restriction $h|_{\partial M} : \partial M\to \partial N$. Here $M=\partial \Delta^{n-1}\times [0,1]$ and $N=\Delta^{n-1}$.

Lemma \ref{lemma:finite-to-one} applies because 
\[
\left( \partial \Delta^{n-1} \times [0,1] \right)^H = \left( \partial \Delta^{n-1} \right)^H \times [0,1],
\]
it allows us to assume, after a perturbation of $h$, that $h^{-1}(0)$ is finite and the degree can be counted geometrically as the sum of local degrees at the points $x\in h^{-1}(0)$. The degree at a point $x\in\partial \Delta^{n-1}$ equals to the degree at any other point $\sigma x$ for $\sigma\in\mathfrak S_n$, because $\sigma$ acts of the orientation of the domain and the range by the permutation sign. 

Hence we are interested in the size of the orbit of a point $x$, which is counted as follows: Split the barycentric coordinates of $x$ into blocks of equal coordinates, let $k_1,\ldots,k_\ell$ be the sizes of the blocks, note that for the boundary points $x$ we have at least two blocks. Then the stabilizer of $x$ has size $k_1!\cdots k_\ell!$ and the size of the orbit is
\[
\frac{n!}{k_1! \cdots k_\ell!} = \binom{n}{k_1\ k_2\ \cdots\ k_\ell}.
\]
Since the multinomial coefficient is the product of the binomial coefficients
\[
\binom{n}{k_1\ k_2\ \cdots\ k_\ell} = \binom{n}{k_1}\cdot \binom{n-k_1}{k_2}\dots \binom{n-k_1 - \dots - k_{\ell - 1}}{k_\ell},
\]
the Lucas theorem \cite{lucas1878} on divisibility of the binomial coefficients by primes implies that the size of the orbit is divisible by $p$ when $n=p^\alpha$, because the first factor in the above formula is already divisible by $p$. Hence the degree of $h$ over zero is always divisible by $p$ and the degree of $f_1$ as a map of $\partial \Delta^{n-1}$ to itself is $1$ modulo $p$. 
\end{proof}

\section{A segment partition problem with choosing nothing}
\label{section:segment}

One particular setting, which we borrow from \cite{segal2018,meunier-zerbib2018}, is when a point $(t_1,\ldots, t_n)\in \Delta^{n-1}$ is interpreted as a partition of a unit segment, in this case different points of the simplex in fact give the same partition. More precisely, in the vector $(t_1,\ldots, t_n)$ we may move zero coordinates of this vector to any position, only keeping the order of positive coordinates, the actual partition of the segment will be the same. Hence the preferences of the players have to follow these permutations, which gives us a modification of the equivariance assumptions.

\subsection{Pseudo-equivariance assumptions}
\label{section:pseudo-equivariant}

Now it is natural to introduce \emph{the segment partition problem with the possibility of choosing nothing} so that preferences are in accordance with the above described identifications. Those identifications can be described by identifying the proper faces of $\Delta^{n-1}$ by linear maps. Those maps $\sigma_{FGZ} : F \to G$ may be viewed as permutations of the coordinates $\sigma_{FGZ} : \Delta^{n-1} \to \Delta^{n-1}$ of the simplex, that move the nonzero coordinates of a face $F$ to the nonzero coordinates of another face $G$ preserving their order, and move the zero coordinates of a face $F$ to zero coordinates of a face $G$ with an arbitrary bijection, which we denote by $Z$. In particular, for given $F$ and $G$ of dimension $k$ there are $(n-k-1)!$ bijections $Z$. The possibility to permute the zero coordinates arises because those permutations do not change the actual partition of the segment. 

We also assume that a player is not allowed to take nothing in the presence of $n$ non-empty parts, otherwise we would have to drop a part, as we did in the previous section. This keeps the covering property, for every $j=1,\ldots,n$,
\[
\Delta^{n-1} = \bigcup_{i=1}^n A_{ij}
\]
and allows, as in the proof of Theorem~\ref{theorem:gale}, to pass to the continuous map $f : \Delta^{n-1}\to \Delta^{n-1}$ setting. In terms of the continuous map, we then have the restrictions 
\begin{equation}
\label{equation:pseudo-eq}
f \circ \sigma_{FGZ} = \sigma_{FGZ} \circ f\quad\text{valid on the face}\quad F. 
\end{equation}
Let us clarify these relation. For given $F,G,Z$ this relation is only applied to points $x\in F\subset\Delta^{n-1}$. The image $\sigma_{FGZ}(x)$ on the left hand side then belongs to $G$, and then $f$ applies to it. On the right hand side we first apply $f$ to $x$ to obtain a point in the simplex that need not belong to any specific facet; after that we apply $\sigma_{FGZ}$ defined as a permutation, taking its $Z$ part into account. 

Note that this setting resembles a certain equivariance assumption on the map $f$, at least on the boundary of $\Delta^{n-1}$. But this is not quite that, because the permutations $\sigma_{FGZ}$ do not constitute a group and the commutation restrictions \eqref{equation:pseudo-eq} are only applied for points lying on the facet $F$. For briefness, let us call a continuous $f:\Delta^{n-1}\to\Delta^{n-1}$ satisfying the commutation restrictions \eqref{equation:pseudo-eq} \emph{pseudo-equivariant}.

Of course, we need to explain, how to pass from sets to continuous functions in the pseudo-equivariant case. Relations \eqref{equation:pseudo-eq} in terms of closed sets $A_{ij}$ read
\begin{equation}
\sigma_{FGZ} \left( A_{ij} \cap F \right) = A_{\sigma_{FGZ}(i)j}\cap G,
\end{equation}
which assumes the form \eqref{equation:pseudo-eq}, when we pass from the closed sets $A_{ij}$ to their upper semicontinuous indicator functions $\chi_{ij} = \chi_{A_{ij}}$. If we approximate the indicator functions by continuous functions without due caution, the assumptions \eqref{equation:pseudo-eq} may fail at a point $x$ in a face $F$, because during the approximation of the $\chi_{ij}$ by continuous functions $f_{ij}$ the values $f_{ij}(\sigma_{FGZ}(x))$ may be influenced by nearby points not belonging to $F$ and not subject to the relation \eqref{equation:pseudo-eq}. 

In order to pass to continuous functions correctly, we put our $\Delta$ into a slightly enlarged concentric simplex $\widetilde\Delta$, and first extend the upper semicontinuous indicator functions $\chi_{ij}$ to $\widetilde\Delta$ by composing them with the metric projection $\pi : \widetilde\Delta\to\Delta$, $\chi_{\widetilde A_{ij}} = \chi_{ij}\circ \pi$. This does not affect the existence of solutions for the partition problem, but allows us to conclude that \eqref{equation:pseudo-eq} will now hold not only on a face $\widetilde F\subset \widetilde\Delta$, but also in some $\epsilon$-neighborhood of $\widetilde F$, for some $\epsilon>0$, because the new $\widetilde F$ projects to the corresponding original $F$ along with its neighborhood. After that we choose a single $\epsilon>0$ for all faces, take continuous functions
\[
g_{ij}(x) = \max\left\{1 - \frac{\dist(x, \widetilde A_{ij})}{\epsilon}, 0 \right\},
\]
and then normalize
\[
f_{ij}(x) = \frac{g_{ij}(x)}{\sum_{i'} g_{i'j}(x)}.
\]
The relations \eqref{equation:pseudo-eq} will hold for such functions on respective faces of $\widetilde\Delta$, since they only depend on the behavior of $\widetilde A_{ij}$ in the $\epsilon$-neighborhood of $x$.

\subsection{A positive solution when $n$ is a prime power}

The arguments in the previous section reduce the segment partition problem with the possibility of choosing nothing to proving that a pseudo-equivariant map $f : \Delta^{n-1}\to\Delta^{n-1}$ sends some point to the center of the simplex. 

\begin{theorem}
\label{theorem:prime-power}
When $n=p^\alpha$, for a prime $p$, any pseudo-equivariant map $f : \Delta^{n-1}\to\Delta^{n-1}$ in the sense of \eqref{equation:pseudo-eq} hits the center $c\in\Delta^{n-1}$. 
\end{theorem}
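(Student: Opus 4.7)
The plan is to argue by contradiction: assume that $f$ avoids the center $c$. Identify the affine hull of $\Delta^{n-1}$ with a vector space $W_n$ placed so that $c$ corresponds to the origin. The hypothesis then gives an extension of $f|_{\partial \Delta^{n-1}}$ across the disk $\Delta^{n-1}$ missing $0 \in W_n$, so after composing with radial projection, $f|_{\partial \Delta^{n-1}}$ is nullhomotopic as a self-map of the sphere $\partial \Delta^{n-1} \cong S^{n-2}$, and its mapping degree relative to the center is zero.

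I would then compute the same degree a second way and show it equals $1$ modulo $p$, producing the contradiction. The tool is the Stokes/degree formula: take the closed solid-angle form $\omega \in \Omega^{n-2}(W_n \setminus \{0\})$ and the straight-line homotopy
\[
h : \partial \Delta^{n-1} \times [0, 1] \to W_n, \qquad h(x,t) = (1-t)\, x + t\, f(x),
\]
from the inclusion $x \mapsto x$ to $f|_{\partial \Delta^{n-1}}$. This $h$ is pseudo-equivariant, because each $\sigma_{FGZ}$ is linear and the inclusion is genuinely $\mathfrak S_n$-equivariant; Stokes applied to $h^* \omega$ on $\partial \Delta^{n-1} \times [0,1]$ then gives
\[
\deg(f|_{\partial \Delta^{n-1}}) - 1 = \sum_{(x,t) \in h^{-1}(0)} \deg_{(x,t)} h,
\]
once $h$ has been perturbed so that $h^{-1}(0)$ is a finite set of transverse preimages.

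The next step is exactly this perturbation, performed within the pseudo-equivariant class and arranged so that every preimage of $0$ lies in the relative interior of a facet of $\Delta^{n-1}$. This is an analogue of Lemma~\ref{lemma:finite-to-one}: at an interior point of a $(k-1)$-face $F$, the permutations $\sigma_{FFZ}$ fix $F$ pointwise and form a copy of $\mathfrak S_{n-k}$ whose fixed subspace in $W_n$ is $k$-dimensional, so the dimension condition $\dim(F \times [0,1])^{\mathfrak S_{n-k}} \le \dim W_n^{\mathfrak S_{n-k}}$ holds (with equality). The main technical obstacle is that the $\sigma_{FGZ}$ do not assemble into a single group action but only relate values across distinct faces, so Lemma~\ref{lemma:finite-to-one} does not apply verbatim; one has to perturb on a representative of each pseudo-orbit of faces using the genuine stabilizer action and then extend across the orbit by pseudo-equivariance, respecting the compatibilities imposed by chains of $\sigma_{FGZ}$'s.

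Finally, I count. For a preimage $(x,t)$ with $x$ interior to a $(k-1)$-face $F$, the pseudo-orbit has size $\binom{n}{k}$: the destination face $G$ may be chosen arbitrarily among the $\binom{n}{k}$ faces of the same dimension, while the permutations $Z$ of zero-coordinate positions yield coincident images. Since each $\sigma_{FGZ}$ acts on $W_n$ and on $\partial \Delta^{n-1}$ by the same signed permutation, the local degree is constant along each pseudo-orbit. By Lucas's theorem, $\binom{n}{k} \equiv 0 \pmod p$ for $n = p^\alpha$ and $1 \le k \le n-1$, so the entire right-hand side is a multiple of $p$, forcing $\deg(f|_{\partial \Delta^{n-1}}) \equiv 1 \pmod p$ and contradicting the vanishing from the first paragraph.
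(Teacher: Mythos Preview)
Your overall architecture matches the paper's: reduce to showing that the homotopy $h$ from the identity to $f|_{\partial\Delta}$ has degree over the center congruent to $0\pmod p$, perturb to make $h^{-1}(0)$ finite, and count local degrees over pseudo-orbits. The perturbation step and the dimension count $\dim(F\times[0,1]) = \dim W_n^{\mathfrak S_{n-k}}$ are also correct; indeed, precisely because this inequality is an \emph{equality}, preimages on low-dimensional faces cannot be perturbed away.

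The gap is in your final paragraph. You assert that ``the local degree is constant along each pseudo-orbit'' because $\sigma_{FGZ}$ acts by the same signed permutation on domain and target. This would be valid for a genuinely equivariant map, but it fails under pseudo-equivariance. The relation \eqref{equation:pseudo-eq} only controls $h$ on the face $F$ itself, whereas the local degree at $(x,t)$ with $x$ in the relative interior of a $(k-1)$-face $F$ depends on $h$ in a full $(n-1)$-dimensional neighborhood of $x$ in $\partial\Delta^{n-1}$. That neighborhood is a union of pieces lying in the $n-k$ facets containing $F$, and $\sigma_{FGZ}$ says nothing about $h$ on those pieces. In particular there is no reason the local degrees at $x$ and at $\sigma_{FGZ}(x)$ should agree.

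What the paper does instead is exactly to confront this difficulty. It splits each such neighborhood into its facet-pieces and defines a real-valued \emph{partial mapping degree} on each piece by integrating the pullback of a radially symmetric form. The facet-level relations $\sigma_{\Delta_i\Delta_j}$ do identify these pieces in $n$-tuples of equal partial degree, but since partial degrees are not integers, this alone does not give divisibility by $p$. One also knows that the partial degrees around each point sum to the integer local degree. The paper encodes both types of relations combinatorially (via the ``kinds'' $[y_1,\ldots,y_{k+2}]$), forms the sums $S_r$, and proves by induction that $(r+1)\binom{n-1}{r+1}S_r\in\mathbb Z$; Lucas' theorem then shows the least common multiple of these coefficients is not divisible by $p^\alpha$, which forces $n\sum_r S_r\equiv 0\pmod p$. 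This combinatorial step is the substantive content of the proof and is absent from your proposal.
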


\begin{proof}
We fix $n=p^\alpha$ and omit it from the notation where appropriate. Like in the proof of Theorem~\ref{theorem:equivariant-gale}, in order to prove what we need, it is sufficient to show that $f(\partial \Delta)$ either has nonzero linking number with the center of $\Delta$, or touches the center. If it touches the center then the problem is solved; hence assume that the center is not touched by $f(\partial \Delta)$ and study the linking number.

Similar to the proof of Theorem \ref{theorem:equivariant-gale}, in order to have information about the linking number we start with the identity map $f_0 : \Delta\to\Delta$, which is pseudo-equivariant and has the linking number of $f(\partial \Delta)$ with the center equal to $1$. It then remains to show that once we deform this $f_0$ to arbitrary $f_1$ pseudo-equivariantly, the linking number may only change by a multiple of $p$, thus remaining always nonzero.

The linking number changes when a point in the boundary $x\in \partial \Delta$ passes through the center $c$ under a pseudo-equivariant homotopy $h_t$ with parameter $t$. If $x$ lies in the relative interior of a $k$-dimensional face $F$ of $\Delta$ then we may apply the relations \eqref{equation:pseudo-eq} to $x$ with different $G$ and $Z$. Those relations show that in total $\binom{n}{k+1}$ images $h_t(\sigma_{FG}(x))$ pass through $c$ together with $x$. Let us call the points $\sigma_{FGZ}(x)$ for different $G$ of dimension $k$ (they do not depend on $Z$) the \emph{pseudo-orbit} of $x$.

The change in the linking number corresponds to the sum of mapping degrees of the homotopy
\[
h : \partial \Delta\times [0,1] \to \Delta
\]
at the points of $h^{-1}(0)$. To make the argument correct, we may assume $h$ piece-wise linear and perturb it generically, keeping the pseudo-equivariance conditions. For any point $x$ in the relative interior of a face $F$, the relations \eqref{equation:pseudo-eq} restrict the image $h(x,t)$ to the linear span of $F$ (``linear'' in the sense that we put the origin to the center of $\Delta$), which has dimension no less than $F\times [0,1]$. Hence, exactly as in the proof of Lemma \ref{lemma:finite-to-one}, a generic pseudo-equivariant PL map $h$ has the property that the preimage of the center under $h$ is a discrete point set, consisting of several pseudo-orbits; and the local mapping degrees are correctly defined.

If we had an equivariance for $h$ under a group action making this pseudo-orbit a real orbit, and permuting their neighborhoods in $\partial\Delta$ accordingly, then we would have that the change in the linking number equals $\binom{n}{k+1}$ times an integer, which would do the job since such a binomial coefficient is divisible by $p$ when $n=p^\alpha$. But we only have pseudo-equivariance in \eqref{equation:pseudo-eq}, whose equations with $\sigma_{FGZ}$ are only applied on the respective face $F$.

In order to use the pseudo-equivariance correctly, we notice that any point of the considered pseudo-orbit belongs to $n-k-1$ facets of $\Delta$ and its disk neighborhood in $\partial \Delta$ splits into $n-k-1$ parts. Some of those parts of neighborhoods of the points in the pseudo-orbits are identified by the maps $\sigma_{\Delta_i\Delta_j}$, corresponding to pairs of facets (the bijection $Z$ in this case is always unique). Since we have $n$ facets in total, we in fact split the parts of neighborhoods of the pseudo-orbit to identified $n$-tuples. 

We may calculate the sum of mapping degrees of $h$ over the pseudo-orbit (or over all points mapped to the center of $\Delta$) by choosing a radially symmetric differential form $\nu\in \Omega^{n-1}(\Delta)$ supported near the center of $\Delta$ with unit integral and integrating its pull-back over the neighborhoods of our pseudo-orbit points. The integration is possible, since we consider a piece-wise linear $h$. We essentially use the mapping degree formula (see \cite[page 188]{guillemin-pollack2010}, for example)
\[
\int_{\partial \Delta\times [0,1]} h^*\nu = (\deg h) \int_{\Delta} \nu = \deg h,
\]
taking in account that the image of the boundary of $\partial \Delta\times [0,1]$ does not hit the support of $\nu$, the neighborhood of the center of $\Delta$. From the assumption that the piece-wise linear map $h$ is in general position, the integral on the left hand side is in fact the integral over neighborhoods of points in the preimage of the center of $\Delta$, if we choose the support of $\nu$ sufficiently small. Hence we assume that we are now studying one pseudo-orbit of such points and integrate over a union of their neighborhoods, split into parts, in order to estimate the corresponding part of the mapping degree of $h$.

Once we split the neighborhoods into parts according to the facets of $\partial\Delta$, we may integrate $h^*\nu$ over every part $P$ of a neighborhood of a point in the pseudo-orbit to obtain a \emph{partial mapping degree} of $P$,
\[
\deg_P h = \int_P h^*\nu.
\]
Here we assume that the parts of neighborhoods $P$ are oriented according to the orientation of $\partial\Delta$. Then the sum over all parts of neighborhoods will be the degree of $h$ in the neighborhood of the pseudo-orbit in question. Note that a partial mapping degree is a real number, not necessarily an integer. The identifications $\sigma_{\Delta_i\Delta_j}$ show that among the numbers $\deg_P h$ obtained by such integration some are equal, the whole collection of these partial mapping degrees in fact split into $n$-tuples of equal real numbers. Those equalities appear with no sign, since $\nu$ is radially symmetric and only changes its sign according to the sign of a permutation of coordinates, which occurs simultaneously in the domain, where the orientation of $\partial\Delta$ also changes according to the sign of the permutation, and in the image of $h$. 

Another relation for the partial mapping degrees $\deg_P h$ is that the sum of partial mapping degrees over the parts of the neighborhood of every point in the pseudo-orbit is an integer, possibly depending on the point, the ordinary local mapping degree. 

We want to use the two types of equalities described above and show that the sum of all partial mapping degrees for the pseudo-orbit in question is an integer divisible by $p$. After the summation over all pseudo-orbits going to the center of $\Delta$ under $h$, this will show that the full mapping degree of $h$ is divisible by $p$ and therefore the degree of $f|_{\partial \Delta}$ as a map from $\partial \Delta$ to $\Delta\setminus \{c\} \sim \partial\Delta$ is always $1$ modulo $p$, as it is for the identity map $f_0$. From this we can conclude that $f$, as a map $\Delta\to\Delta$, always touches the center of the simplex. 

Let us introduce some notation in order to work with partial mapping degrees and their sum. Consider a point $x$ in the pseudo-orbit, describe its \emph{kind} by the sequence $[y_1, \ldots, y_{k+2}]$, where $y_i$ is the number of zero coordinates between the $(i-1)$th and $i$th nonzero coordinates of $x$. More precisely, if $x_{i_1}, \ldots, x_{i_{k+1}}$ are the nonzero coordinates of $x$ then the kind of $x$ is $[i_1-1, i_2-i_1-1, \ldots, i_{k+1}-i_k-1, n-i_{k+1}]$. For example, the point $(0, x_2, 0, 0, x_5)$ will have the kind $[1, 2, 0]$. For any sequence $y_1, \ldots, y_{k+2}$ of non-negative integers summing up to $n-k-1$ there corresponds a unique point of kind $[y_1, \ldots, y_{k+2}]$ in the pseudo-orbit of a given point $x$ from a relative interior of a $k$-dimensional face of the simplex. Hence we may use the kinds to enumerate points in a pseudo-orbit.

Let $P$ be a part of the neighborhood of a point of the kind $[y_1, \ldots, y_{k+2}]$ in the facet given by $t_i=0$. The $i$th coordinate of the point is $0$ and there is some $y_j$ to which it corresponds. Hence $P$ is uniquely described by $[y_1, \ldots, y_{k+2}]$ with sum $n-k-1$ and the choice of the index $j$ of the position of zero. We may view the points of $P$ as $k+1$ big coordinates, $n-k-2$ small coordinates (which were zero for original pseudo-orbit points in $k$-faces), and one zero. The sequence $[y_1, \ldots, y_{j-1}, y_j-1, y_{j+1}, \ldots, y_{k+2}]$ then describes the positions of small coordinates among big coordinates and ignores zero. The identifications of $n$ such parts of neighborhoods in a pseudo-orbit corresponds to inserting zero into arbitrary position of a given sequence of big and small coordinates; therefore it is natural to call $[y_1, \ldots, y_{j-1}, y_j-1, y_{j+1}, \ldots, y_{k+2}]$ the \emph{kind} of a pseudo-orbit of parts of neighborhoods. Then to each sequence $y_1, \ldots, y_{k+2}$ of non-negative integers summing up to $n-k-2$ there corresponds a unique part of neighborhood kind.

Moreover, we denote by $\deg[y_1, \ldots, y_{j-1}, y_j-1, y_{j+1}, \ldots, y_{k+2}]$ the partial mapping degree of any part of a neighborhood of the given kind, this degree indeed only depends on the kind. In order to prove the theorem, we need to show that the sum of all such degrees, multiplied by $n$, is an integer divisible by $p$. We split this sum into several parts, for any integer $0\leq r\leq n-k-2$, put 
\[
S_r=\sum_{r+y_2 + \dots + y_{k+2}=n-k-2} \deg[r, y_2, \ldots, y_{k+2}],
\] 
and put $S_{-1}=0$ for consistency. What we need to prove then translates to 
\begin{equation}
\label{eq:goal}
n\sum_{r=0}^{n-k-2} S_r \equiv 0 \mod p.
\end{equation}

Summing up the partial mapping degrees in the neighborhood of the point of the kind $[y_1, \ldots, y_{k+2}]$ we get
\begin{equation}
\label{eq:ngb}
\sum_i y_i \deg[y_1, \ldots, y_i-1, \ldots, y_{k+2}] \in {\mathbb Z}.
\end{equation}

Summing up formulas of \eqref{eq:ngb} for different kinds with $y_1=r$ we get
\begin{equation}
\label{eq:sumr}
rS_{r-1} + (n-r-1)S_r\in{\mathbb Z}.
\end{equation}
Indeed, each $\deg[r-1, y_2, \ldots, y_{k+2}]$ contributes with coefficient $r$ in \eqref{eq:ngb} for the neighborhood of the point of the kind $[r, y_2, \ldots, y_{k+2}]$. And each $\deg[r, y_2, \ldots, y_{k+2}]$ contributes with coefficient $y_2+1$ in \eqref{eq:ngb} for the neighborhood of the point of the kind $[r, y_2+1, \ldots, y_{k+2}]$, with the coefficient $y_3+1$ in \eqref{eq:ngb} for the neighborhood of the point of the kind $[r, y_2, y_3+1, \ldots, y_{k+2}]$, and so on. Its total contribution then is 
\[
(y_2 + 1) + \dots + (y_{k+2} + 1),
\] 
which is equal to $n-k-2 - r + (k+1)=n-r-1$.

Let us prove by induction that 
\begin{equation}
\label{eq:sumr_ind}
(r+1)\binom{n-1}{r+1}S_r\in{\mathbb Z}.
\end{equation}
The base $r=0$ of induction follows from \eqref{eq:sumr} with $r=0$. Suppose we have proved \eqref{eq:sumr_ind} for some $r$. Writing \eqref{eq:sumr} for $r+1$, we get
\[
(r+1)S_{r} + (n-r-2)S_{r+1}\in{\mathbb Z}.
\]
Multiply by $\binom{n-1}{r+1}$ to get
\[
(r+1)\binom{n-1}{r+1}S_{r} + (n-r-2)\binom{n-1}{r+1}S_{r+1}\in{\mathbb Z}.
\]
By the induction assumption, we have
\[
(n-r-2)\binom{n-1}{r+1}S_{r+1}\in{\mathbb Z}.
\]
Substituting $\binom{n-1}{r+1}=\frac{r+2}{n-r-2}\binom{n-1}{r+2}$, we get the desired result
\[
(r+2)\binom{n-1}{r+2}S_{r+1}\in{\mathbb Z}.
\]

Since $n=p^{\alpha}$ is a prime power, then all digits of $n-1$ in $p$-adic notation are $p-1$. Hence, by the Lucas theorem \cite{lucas1878} we get that $\binom{n-1}{r+1}$ is not divisible by $p$. This means that $(r+1)\binom{n-1}{r+1}$ is not divisible by $p^{\alpha}$ for all $0\leq r\leq n-k-2$, since $r$ is not divisible by $p^\alpha$. Therefore, the least common multiple $m$ of the numbers $(r+1)\binom{n-1}{r+1}$ for all $0\leq r\leq n-k-2$ is also not divisible by $p^{\alpha}$.

From \eqref{eq:sumr_ind} we conclude that
\[
m\sum_r S_r\in{\mathbb Z}.
\]
For each kind of a neighborhood there are exactly $n$ partial neighborhoods of this kind, so we also know that 
\[
n\sum_r S_r = p^\alpha \sum_r S_r\in{\mathbb Z}.
\]
Hence, $n\sum _r S_r$ is divisible by $\frac{n}{\mathrm{gcd}(n, m)}$, which in turn is divisible by $p$, because $m$ is not divisible by $n=p^{\alpha}$. This establishes \eqref{eq:goal} and completes the proof.
\end{proof}

\subsection{Counterexamples when $n$ is not a prime power}
\label{section:counterexamples}

As it was shown above, in order to build a counterexample, where the segment partition problem with possibility to choose nothing and no part can be dropped has no solution, it is sufficient to build a pseudo-equivariant map $f : \Delta^{n-1}\to\Delta^{n-1}$ missing the center $c\in\Delta^{n-1}$ and put 
\[
A_{ij} = \left\{t\in \Delta^{n-1} \where \forall i'\ f_i(t)\ge f_{i'}(t) \right\}
\]
independent on the player index $j$.

The first observation is that it is sufficient to have a pseudo-equivariant map $f$ such that the image of the boundary $f(\partial \Delta^{n-1})$ is not linked with the center $c\in\Delta^{n-1}$. Since the homotopy group $\pi_{n-2}\left(\Delta^{n-1}\setminus \{c\}\right)$ is $\mathbb Z$, the possibility to (re)extend $f$ continuously to the interior of the simplex $\Delta^{n-1}$ is fully governed by the linking number and any such continuous extension does not violate the pseudo-equivariance relations \eqref{equation:pseudo-eq}, because the relations are only applicable on the boundary of the simplex.

The second observation is that it is sufficient to find a continuous map $f : \partial \Delta^{n-1}\to \Delta^{n-1}$ having zero linking number of the image with the center of the simplex and equivariant with respect to the action of the full permutation group $\mathfrak S_n$. The full equivariance on the boundary implies the pseudo-equivariance we need, and a continuous extension of $f$ to the interior of the simplex is possible provided the linking number is zero. 

In what follows we will switch between the two points of view: To find $f : \partial \Delta^{n-1}\to \Delta^{n-1}$ with zero linking number with the center is the same as to find $f : \partial \Delta^{n-1}\to \partial \Delta^{n-1}$ with zero mapping degree. In order to see these are the same just compose $f$ with a central projection from the center of the simplex to have its image contained in the boundary of the simplex; and note that such a projection preserves equivariance and pseudo-equivariance.

One counterexample is in fact a counterexample to Theorem \ref{theorem:equivariant-gale}.

\begin{theorem}
\label{theorem:n-odd}
If $n$ is odd and not a prime power then there exists an $\mathfrak S_n$-equivariant continuous $f : \partial \Delta^{n-1}\to\Delta^{n-1}$ of zero linking number with the center of $\Delta^{n-1}$.
\end{theorem}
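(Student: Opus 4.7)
First, I would reduce to constructing an $\mathfrak{S}_n$-equivariant continuous self-map $g : S(V_n)\to S(V_n)$ of ordinary mapping degree zero. Here $V_n = \{y\in\mathbb R^n : \sum y_i = 0\}$ is the standard permutation representation, and $\partial \Delta^{n-1}$ is identified with the unit sphere $S(V_n)$ by translating the centre of $\Delta$ to the origin. By the two observations stated just before the theorem, composing such a $g$ with the inclusion $S(V_n)\hookrightarrow V_n$ produces an $\mathfrak{S}_n$-equivariant map of zero linking number with the centre.

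Next I would describe the set $D\subset\mathbb Z$ of degrees realisable by $\mathfrak{S}_n$-equivariant self-maps of $S(V_n)$. The goal is to show $D = 1 + N\cdot\mathbb Z$, where $N$ is the gcd of the sizes of all $\mathfrak{S}_n$-orbits on $S(V_n)$. The inclusion $D\subseteq 1 + N\cdot\mathbb Z$ is proved exactly as in Theorem~\ref{theorem:equivariant-gale}: a generic equivariant PL homotopy from $\id$ to any equivariant self-map hits the centre in a finite disjoint union of full $\mathfrak{S}_n$-orbits, and by the sign-cancellation argument the local mapping degrees at the orbit points agree, so the overall degree difference is $\sum_O |O|\cdot d_O$ with $d_O\in\mathbb Z$. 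The converse inclusion requires, for each orbit $O$, producing an explicit equivariant self-map whose degree differs from $1$ by $\pm|O|$; I would do this by perturbing $\id$ equivariantly through a transverse zero-orbit of $h^{-1}(0)$ along a single copy of $O$.

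The remaining step is purely arithmetic. The ``two-value'' orbits on $S(V_n)$, whose representatives have exactly two distinct coordinate values in a $k:(n-k)$ ratio for some $1\le k\le n-1$, have stabiliser $\mathfrak{S}_k\times\mathfrak{S}_{n-k}$ and hence size $\binom{n}{k}$. A classical consequence of Kummer's theorem (equivalently Lucas's theorem \cite{lucas1878} on $p$-adic digits) says that $\gcd\bigl\{\binom{n}{k} : 1 \le k \le n-1\bigr\}$ equals a prime $p$ when $n = p^{\alpha}$ and equals $1$ otherwise. Hence when $n$ is odd and not a prime power we have $N=1$, so $D = \mathbb Z$; in particular $0\in D$, and the required degree-zero $\mathfrak{S}_n$-equivariant self-map of $S(V_n)$ exists.

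The step I expect to be the main obstacle is the realisability statement in the second paragraph. The subtlety is that a perturbation producing a single transverse zero-orbit at $O$ must be equivariant under the stabiliser of an orbit representative. At a two-value point this stabiliser is $\mathfrak{S}_k\times\mathfrak{S}_{n-k}$ acting on the $(n-2)$-dimensional tangent space as the direct sum of the standard representations of $\mathfrak{S}_k$ and $\mathfrak{S}_{n-k}$. One has to exhibit an equivariant section of this representation with a single simple zero at the origin, spread it equivariantly across $O$ via a cut-off function supplied by Lemma~\ref{lemma:finite-to-one}, and verify the sign of the resulting local degree in order to recover both $+|O|$ and $-|O|$ as degree differences, so that the full subgroup $N\cdot\mathbb Z$ (and not merely $2N\cdot\mathbb Z$) is realised.
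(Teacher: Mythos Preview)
Your overall strategy is the same as the paper's: start from the identity, modify it equivariantly by pushing an $\mathfrak{S}_n$-orbit of points through the centre so that the degree changes by $\pm|O|$, and use that the two-value orbit sizes $\binom{n}{k}$ have gcd $1$ when $n$ is not a prime power.

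The step you flag as ``the main obstacle'' is, however, a genuine gap, not a technicality; it is in fact the whole content of the theorem. As written, your argument never uses the hypothesis that $n$ is odd, so if it went through it would prove the statement for every non-prime-power $n$. But the paper explicitly says that this fully $\mathfrak{S}_n$-equivariant construction does \emph{not} work by hand for even $n$ (already $n=6$ is problematic) and defers that case to~\cite{avku2019}. The obstruction is exactly the sign issue you anticipate. At a two-value point with stabiliser $H=\mathfrak{S}_k\times\mathfrak{S}_{n-k}$, the tangent space to $S(V_n)$ is $V_k\oplus V_{n-k}$, of dimensions $k-1$ and $n-k-1$; by Schur's lemma an $H$-equivariant linear automorphism is a pair of scalars with determinant $\lambda^{k-1}\mu^{n-k-1}$, and since $(k-1)+(n-k-1)=n-2$, when $n$ is even both exponents have the same parity and for $k$ odd neither is odd, so no $H$-equivariant orientation reversal exists. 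In that case only one sign of $\binom{n}{k}$ is realisable and your claimed equality $D=1+N\mathbb{Z}$ is not established by orbit-crossing alone.

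The paper's resolution is precisely this parity observation. Working with the centre $c_1$ of a $k$-dimensional face $F$, it applies $-\id$ to $T_{c_1}F$; this commutes with the stabiliser and has determinant $(-1)^k$, hence reverses the crossing sign when $k$ is odd. Then, because $n$ is odd, the symmetry $\binom{n}{k+1}=\binom{n}{n-k-1}$ means every binomial coefficient already arises from some odd $k$, so both signs are available for every binomial and Ram's theorem finishes the job. This is the sole place the oddness hypothesis enters, and it is what your proposal needs to supply.
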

\begin{proof}
We fix $n$ and omit $n$ from the notation where appropriate. We will start with the identity $f_0 : \partial \Delta \to \partial \Delta$, considered also as the inclusion $\partial \Delta \to \Delta$. It definitely has degree $1$ and we are going to modify it equivariantly so that its mapping degree will become $0$.

A modification will consist in taking a dimension $k$, all the centers of the $k$-dimensional simplices $c_1,\ldots,c_N$, $N=\binom{n}{k+1}$, and pulling the images $f(c_i)$ to the center of $\Delta$ (along with pulling their neighborhoods continuously and equivariantly). When the images $f(c_i)$ cross the origin, the linking number of $f(\partial \Delta)$ will change by either $+1$ or $-1$ at every point, and by $\pm\binom{n}{k+1}$ in total. 

Of course, in such a modification the sign $+$ or $-$, at first glance, is fixed. But we may not only pull a point $c_1$ towards the origin, but also flip the mapping derivative image of the tangent space $T_{c_1}F$ to the $k$-face $F$ containing $c_1$ on the way. Such a flip commutes with the stabilizer of $c_i$ in the permutation group and can therefore be extended equivariantly to the neighborhood of the orbit $\{c_i\}$. Moreover, when $k$ is odd, this flip will change the sign of the crossing and therefore we will be able to choose the sign of the modification by applying or not applying the flip before the crossing. See the details of this pulling and flipping moves, for $n=3$, in Figures \ref{figure:pulling} and \ref{figure:pulling-all}.

When $k$ is even, the flip does not change the sign of the crossing, hence we are only able to make one crossing, and when we pull the point $c_1$ (and equivariantly its orbit) back through the center of $\Delta$, we just make the opposite crossing and return to where we started from in terms of the linking number. When $k$ is odd, we have much more freedom. We may pull the images $f(c_i)$ and their neighborhoods to the center $c\in\Delta$ once again and once again choose the sign of the crossing using or not using the equivariant flip before the crossing. In total, for odd $k$, this allows us to change the linking number by any multiple of $\binom{n}{k+1}$, positive or negative. Figure \ref{figure:pulling-twice} shows how to make two successive changes of the linking number in the same direction.

\begin{figure}[ht]
\center
\includegraphics[width=100mm]{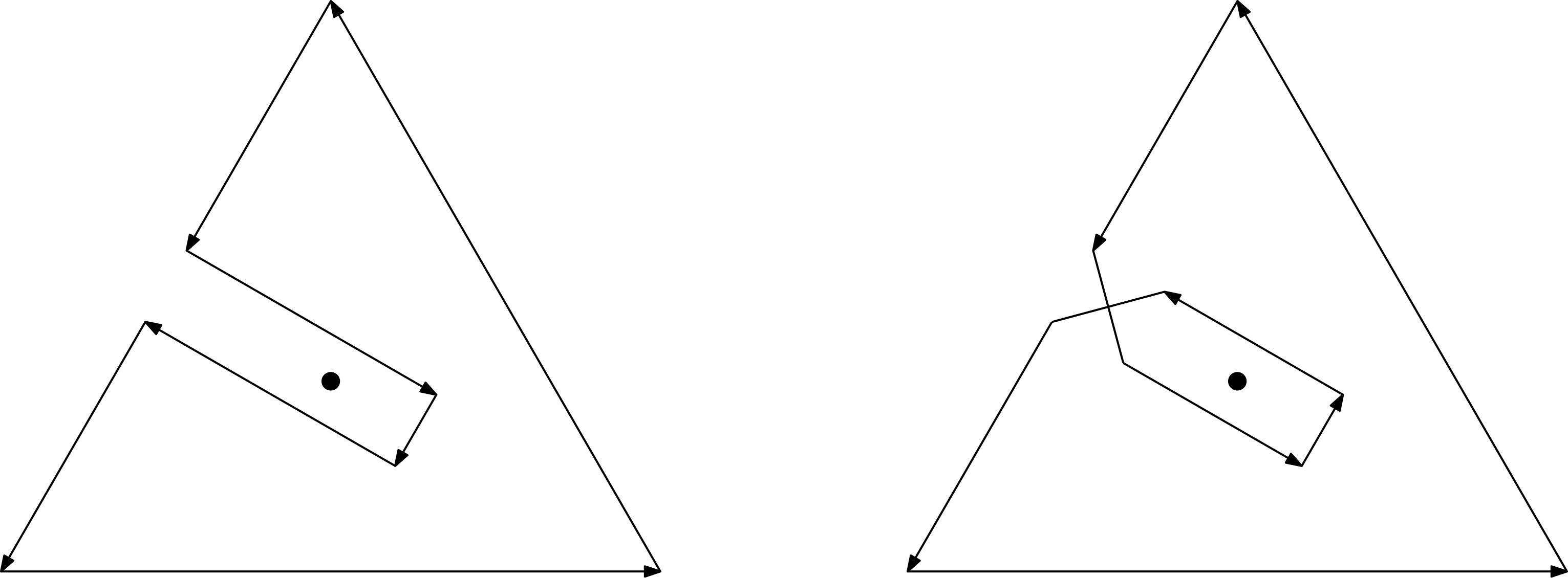}
\caption{Pulling one point towards the center with/without a flip of signs.}
\label{figure:pulling}
\end{figure}

\begin{figure}[ht]
\center
\includegraphics[width=100mm]{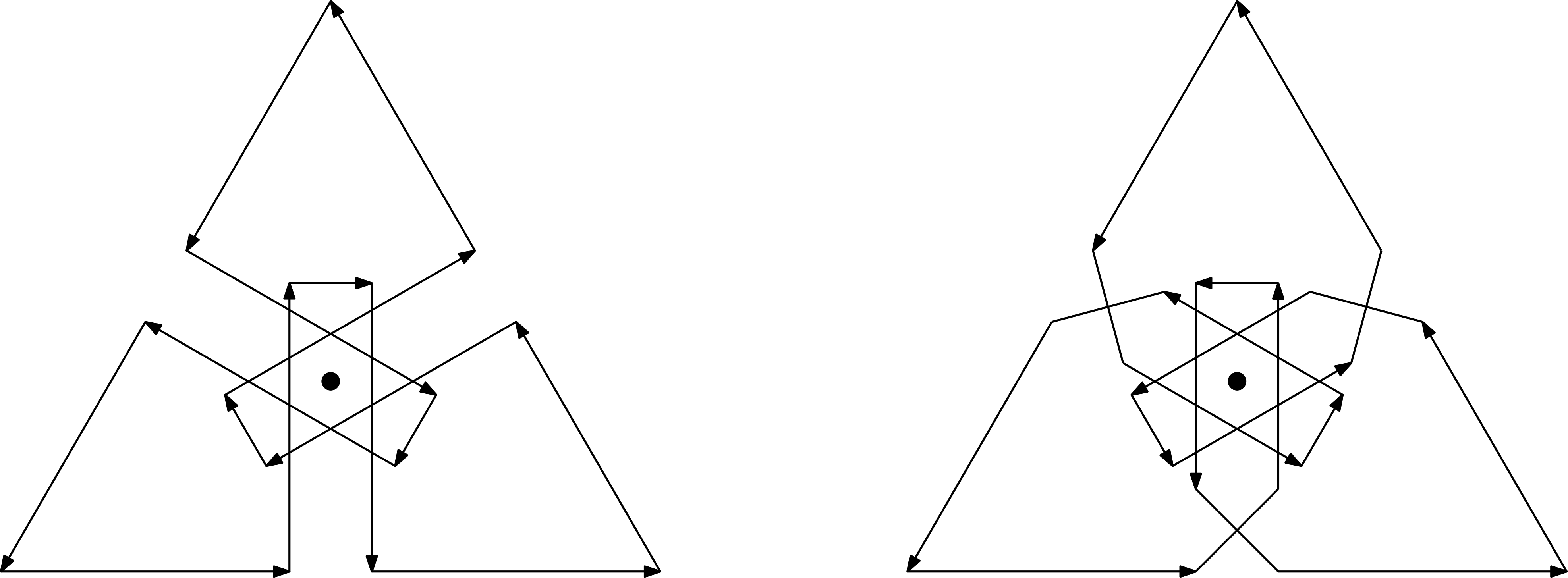}
\caption{Pulling an orbit of points towards the center.}
\label{figure:pulling-all}
\end{figure}

\begin{figure}[ht]
\center
\includegraphics[width=60mm]{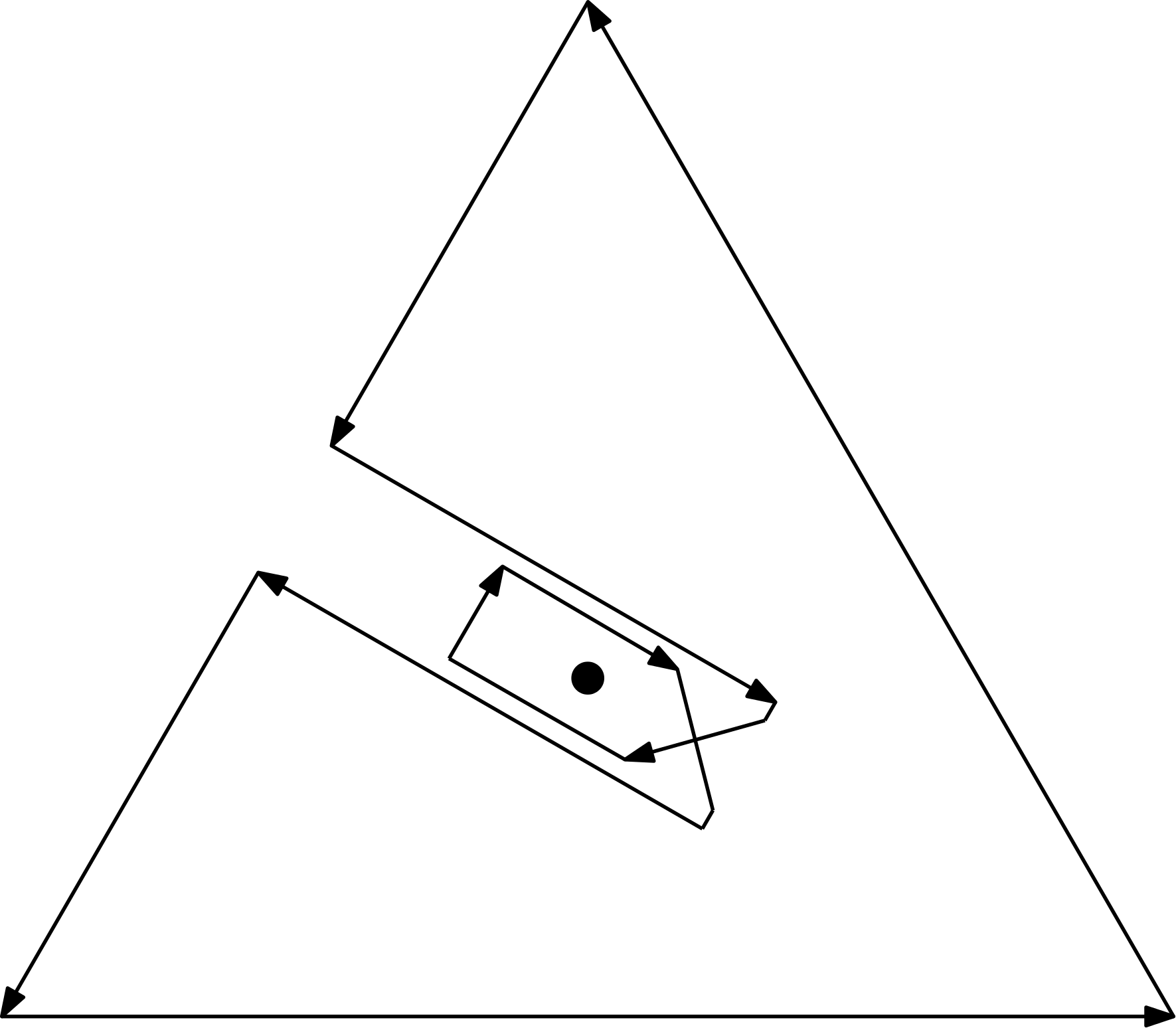}
\caption{Pulling a point towards the center and then pulling it back with a flip. The other points in the orbit are not shown.}
\label{figure:pulling-twice}
\end{figure}

Recall Ram's theorem \cite{ram1909} (or the Lucas theorem \cite{lucas1878} that we have already used) that asserts that there exist integers $x_1,\ldots,x_{n-1}$ such that
\[
x_1 \binom{n}{1} + x_2 \binom{n}{2} + \dots + x_{n-1}\binom{n}{n-1} = -1,
\]
provided $n$ is not a prime power. Note that in our case $n$ is not a prime power. 

Moreover, $n$ is odd and therefore, in view of the symmetry $\binom{n}{k+1} = \binom{n}{n-k-1}$, the set of the binomial coefficients is the same as the set of binomial coefficients with even $k+1$. Hence, if we repeatedly use our moves for odd $k$ with possible flips then by Ram's theorem we will be able to modify the linking number of $f(\partial \Delta)$ with $c$ from $1$ to zero. 
\end{proof}

It remains to handle the case of even $n$, but this is less easy. In the above argument we cannot change the crossing sign for even $k$ and $n-k-2$, in particular, we can add or subtract $\binom{n}{k+1}$ from the linking number, but cannot repeat this operation, since when we move the orbit back to the center of $\Delta$, we just change the linking number back. A flip was really needed in order to have a chance to repeat the change by $\pm\binom{n}{k+1}$ several times in the same direction. In particular, for $n=6$ we failed to produce a $\mathfrak S_6$-equivariant map $\Delta^5\to\Delta^5$ of zero degree by hand.

What we are able to do now, is to do this in the setting of pseudo-equivariance instead of full equivariance. The following result shows that the segment partition problem with the possibility of choosing nothing has no solution if $n$ is not a prime power.

\begin{theorem}
\label{theorem:n-even}
If $n$ is not a prime power then there exists a pseudo-equivariant, in terms of relations \eqref{equation:pseudo-eq}, continuous $f : \partial \Delta^{n-1}\to\Delta^{n-1}$ of zero linking number with the center of $\Delta^{n-1}$.
\end{theorem}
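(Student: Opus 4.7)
The plan is to imitate the proof of Theorem~\ref{theorem:n-odd}, starting from the identity map $f_0=\id:\partial\Delta^{n-1}\to\partial\Delta^{n-1}\subset\Delta^{n-1}$ (of linking number $+1$ with the center) and successively deforming it by pseudo-equivariant homotopies that pull pseudo-orbits of $k$-face centers through the center of $\Delta^{n-1}$. Each such pull changes the linking number by $\pm\binom{n}{k+1}$, and since $n$ is not a prime power, by Ram's theorem~\cite{ram1909} there exist integers $x_1,\ldots,x_{n-1}$ with
\[
\sum_{j=1}^{n-1}x_j\binom{n}{j}=-1,
\]
so if for every $k$ we are free to choose both the sign and the multiplicity of the pull, we can reduce the linking number from $1$ to $0$.

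The essential new ingredient is that in the pseudo-equivariant setting the orientation-reversing flip of the derivative used in Theorem~\ref{theorem:n-odd} is available for \emph{every} $k$, not only for odd $k$. At the center $c$ of a $k$-face $F$, the relevant pseudo-stabilizer consists of the self-maps $\sigma_{FFZ}$ with $\sigma_{FFZ}(c)=c$: these fix all nonzero coordinate positions of $F$ in order and permute only the zero positions. Since $T_c F$ is spanned by directions along the nonzero positions of $F$, the pseudo-stabilizer acts trivially on $T_c F$, so \emph{any} linear automorphism of $T_c F$---in particular the orientation-reversing map $-\id$---commutes with it and is a legitimate flip. In the fully equivariant setting of Theorem~\ref{theorem:n-odd}, by contrast, the full stabilizer $S_{k+1}$ acts on $T_c F$ as its standard representation, Schur's lemma forces equivariant endomorphisms to be scalars, and $-\id$ is orientation-reversing only for odd $k$. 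This extra flexibility is precisely what is needed to make the pulling operation repeatable (with any chosen sign) for all dimensions $k$.

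Once the local flip $A:T_cF\to T_cF$ has been chosen, the pseudo-equivariance relations~\eqref{equation:pseudo-eq} transport it consistently to the rest of the pseudo-orbit as $\sigma_{FGZ}\circ A\circ\sigma_{FGZ}^{-1}$ at $c'=\sigma_{FGZ}(c)$; this is independent of the choice of $Z$ realizing $c\mapsto c'$, because any two such choices differ by an element of the pseudo-stabilizer, which acts trivially on $T_cF$. Using a bump function supported in a small pseudo-equivariant neighborhood of the orbit, the flipped pull extends to a genuine pseudo-equivariant homotopy on $\partial\Delta^{n-1}\times[0,1]$, with successive orbits pulled in disjoint time-windows so that they do not interfere; generic PL perturbation as in Lemma~\ref{lemma:finite-to-one} keeps the preimage of the center discrete, making the local mapping-degree count legitimate. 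Assembling Ram's combination of these flipped pulls produces the desired pseudo-equivariant map $f_1:\partial\Delta^{n-1}\to\Delta^{n-1}$ of zero linking number with the center.

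The main obstacle is verifying that the local flipping procedure really extends to a globally pseudo-equivariant homotopy. The concern is that the relations~\eqref{equation:pseudo-eq} on a face $F'$ properly containing $F$ might impose additional constraints incompatible with the flip installed on $F$. This is ruled out by the compatibility of the family $\{\sigma_{FGZ}\}$ under composition and inclusions of faces: the transport from $F$ and the transport from any $F'\supset F$ agree where both apply, since on the common part of the tangent spaces the pseudo-stabilizer still acts trivially on the relevant nonzero-position directions. A careful bookkeeping of the bump-function extension, paralleling the one at the end of the proof of Theorem~\ref{theorem:n-odd}, then delivers the pseudo-equivariant homotopy and completes the construction.
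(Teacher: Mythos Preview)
Your approach is essentially the paper's: exploit that the pseudo-stabilizer of a point in the relative interior of a $k$-face $F$ acts trivially on $T_cF$, hence an orientation-reversing flip is available for every $k\ge 1$, and then invoke Ram's theorem. The paper phrases the key observation as ``any composition of the $\sigma_{F'G'Z}$ with $F'\supseteq F$ that sends $F$ to itself is the identity on $F$'' and then picks a single direction $v_1\in T_{c_1}F$ to reflect along; it also uses the freedom to place $c_1$ anywhere in the interior of $F$ (not just at the barycenter) to make successive pulls independent, where you use disjoint time windows instead.

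One genuine slip to fix: you cite $-\id$ as ``the orientation-reversing map'' on $T_cF$, but $\det(-\id)=(-1)^k$, so for even $k$ (precisely the new case you need to cover) the map $-\id$ is orientation-\emph{preserving} and does not change the sign of the crossing. Your surrounding claim that \emph{any} linear automorphism commutes with the trivial pseudo-stabilizer action is correct, so the repair is immediate: use a reflection along a single direction $v\in T_cF$ (this is exactly the paper's choice), which has determinant $-1$ in every dimension. Also note that $k=0$ gives a $0$-dimensional tangent space with no flip available; handle $\binom{n}{1}$ via the symmetry $\binom{n}{1}=\binom{n}{n-1}$, i.e.\ use $k=n-2$ instead, so that only faces of positive dimension are needed.
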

\begin{proof}
We do the same modifications as in the previous proof, but we need to handle the case of even $k$. In view of the relations $\binom{n}{k+1} = \binom{n}{n-k-1}$ we may also assume that $k\ge n/2-1\ge 2$.

Note that, for a $k$-face $F$, any composition of the pseudo-equivariance symmetries $\sigma_{F'G'Z}$ with $F'\supseteq F$ cannot take the face $F$ to itself and induce a non-identity map on it, because all such symmetries preserve the order of the nonzero coordinates. Hence we can choose a direction $v_1\in T_{c_1}F$ (because we only consider faces of positive dimension) in any point $c_1$ in the relative interior of $F$ and we will have the well-defined defined pseudo-orbit $\{c_i\}$ of this point and this direction $v_i\in T_{c_i}F_i$, so that the pseudo-equivariance symmetries permute those points and those directions whenever they are defined on them. 

Now we modify the original identity map $f_0$, we pull the images of the pseudo-orbit $f(c_i)$ towards the center $c$ of $\Delta$ and on the way to the center we flip the tangent space $f_* \left( T_{c_1}F_1 \right)$ along the chosen direction $f_* v_1$, if we need to switch the sign of the crossing. The corresponding flips around every point of the pseudo-orbit $\{f(c_i)\}$ will be made in the pseudo-equivariant fashion, in total allowing us to modify the linking number by $\pm\binom{n}{k+1}$ with a sign we choose. 

It is possible to iterate such steps, moreover, in the absence of the true equivariance we are allowed to choose $c_1\in F$ different from the center of $F$, making every step independent of the other steps. Having the possibility to choose the sign and iterate, in view of Ram's theorem for non-prime power $n$, we can obtain zero linking number.
\end{proof}

\section{A negative result for the equivariant fair partition technique}
\label{section:borsuk-ulam}

\subsection{Failure of a Borsuk--Ulam-type result from the mapping degree}

One general approach to envy-free segment partition problems (or fair partition problems, as in \cite{ahk2014,aak2018}) is to introduce a configuration space $X$ with an action of $\mathfrak S_n$ and a test map $f : X\to \mathbb R^n$ equivariant with respect to the action of $\mathfrak S_n$ on $X$ and its action on $\mathbb R^n$ by permuting the coordinates so that a solution to the problem is a situation when for some $x\in X$ the image $f(x)$ hits the diagonal
\[
D_n = \{(u,u,\dots,u) \in \mathbb R^n\where u\in\mathbb R\}.
\]
Sometimes, a Borsuk--Ulam-type theorem guarantees such a diagonal hit. We now show that Theorem \ref{theorem:n-odd} guarantees that there is no such Borsuk--Ulam-type theorem for certain values of $n$:

\begin{theorem}
\label{theorem:odd-non-pp}
Assume $n$ is odd and not a prime power. Then for any Hausdorff compactum $X$ with a free action of $\mathfrak S_n$ there exists a continuous $\mathfrak S_n$-equivariant map $X\to \mathbb R^n$ not touching the diagonal $D_n\subset\mathbb R^n$.
\end{theorem}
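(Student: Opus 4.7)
The plan is to reduce the statement to building an $\mathfrak S_n$-equivariant map $X \to \partial\Delta^{n-1}$ and then to invoke Theorem~\ref{theorem:n-odd} together with equivariant obstruction theory. Writing $\mathbb R^n = D_n \oplus W_n$, where $W_n = \{x \in \mathbb R^n : \sum x_i = 0\}$ is the standard representation of $\mathfrak S_n$, the equivariant linear projection $\mathbb R^n \to W_n$ carries $\mathbb R^n \setminus D_n$ to $W_n \setminus \{0\}$; radial normalization then gives an $\mathfrak S_n$-equivariant deformation retraction of $\mathbb R^n \setminus D_n$ onto $S(W_n)$, and $S(W_n) \cong \partial\Delta^{n-1}$ equivariantly (via radial projection from the centroid). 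So it suffices to produce an equivariant map $X \to \partial\Delta^{n-1}$.

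After a small equivariant perturbation one may assume that the map $f : \partial\Delta^{n-1} \to \Delta^{n-1}$ supplied by Theorem~\ref{theorem:n-odd} misses the center $c$; central projection from $c$ then yields an equivariant self-map $g : \partial\Delta^{n-1} \to \partial\Delta^{n-1}$ of degree $0$. Now I would build the required map on $X$ by equivariant obstruction theory. First reduce to the case of a free $\mathfrak S_n$-simplicial complex (a Hausdorff compactum with a free $\mathfrak S_n$-action is the inverse limit of the nerves of its finite equivariant open covers, and it suffices to construct the maps at each finite stage compatibly). For such a free $\mathfrak S_n$-complex, construct $\phi : X \to \partial\Delta^{n-1}$ by induction on equivariant skeleta of $X/\mathfrak S_n$. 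Since $\partial\Delta^{n-1} \cong S^{n-2}$ is $(n-3)$-connected, no obstruction appears up to the $(n-2)$-skeleton. The primary obstruction to extending to the $(n-1)$-skeleton lies in $H^{n-1}(X/\mathfrak S_n;\mathbb Z_\sigma)$ (with coefficients twisted by the sign character, reflecting the $\mathfrak S_n$-action on $\pi_{n-2}(S^{n-2})=\mathbb Z$), and it equals the pullback of the twisted Euler class $e(W_n)\in H^{n-1}(B\mathfrak S_n;\mathbb Z_\sigma)$ along the classifying map $X/\mathfrak S_n\to B\mathfrak S_n$. The existence of the equivariant self-map $g$ of degree $0$, together with the identity of degree $1$, implies that the set of degrees of equivariant self-maps of $S(W_n)$ is all of $\mathbb Z$, which forces $e(W_n)=0$. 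Higher obstructions in $H^{k+1}(X/\mathfrak S_n;\pi_k(S^{n-2}))$ for $k\ge n-1$ are killed inductively by modifying $\phi$ via post-composition with equivariant self-maps of $\partial\Delta^{n-1}$ of suitably chosen degree, whose existence is guaranteed by the flexibility supplied by $g$.

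The main obstacle is the higher-obstruction bookkeeping. The vanishing of the primary (Euler-class) obstruction is immediate from $g$, but verifying that all subsequent obstructions can be killed coherently, and that the nerve approximation for an arbitrary Hausdorff compactum is equivariantly compatible with these choices, is the most delicate part. The key conceptual point is that the equivariant $\mathfrak S_n$-index of $\partial\Delta^{n-1}$ is infinite for odd $n$ that is not a prime power, so no Borsuk--Ulam-type obstruction remains to block the extension.
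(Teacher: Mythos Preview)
Your reduction to constructing an equivariant map $X \to S(W_n) \cong \partial\Delta^{n-1}$ and your use of Theorem~\ref{theorem:n-odd} to obtain a degree-$0$ equivariant self-map $g$ match the paper exactly. The divergence, and the gaps you yourself flag, lie in the passage from $g$ to a map on $X$.

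Two concrete issues. First, the chain ``degrees $0$ and $1$ exist $\Rightarrow$ all degrees exist $\Rightarrow$ $e(W_n)=0$'' is an unnecessary detour, and you justify neither implication (the first presumes the degree set is a coset, which is true but nontrivial; the second is not explained at all). What you actually need is simpler and more direct: $g$ is non-equivariantly null-homotopic, so $g_*=0$ on every $\pi_k(S^{n-2})$, and post-composing any equivariant partial map $X^{(k)}\to S(W_n)$ with $g$ annihilates the obstruction cocycle outright. Second, and more seriously, the inverse-limit reduction does not work as stated: different nerve stages may require different numbers of post-compositions with $g$, and the resulting maps need not be compatible across the system. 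A single nerve actually suffices---pull back a finite evenly-covered open cover of $X/\mathfrak S_n$ to get a finite $\mathfrak S_n$-invariant cover of $X$ with free action on the index set, take the nerve map once, and on that finite free $\mathfrak S_n$-complex your iterated post-composition with $g$ terminates.

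The paper bypasses obstruction theory entirely via Lemma~\ref{lemma:zero-degree}: since $g$ is null-homotopic it extends over one cone of the join $\mathfrak S_n * S(W_n)$, and equivariance propagates this to the remaining cones, yielding an equivariant map $\mathfrak S_n * S(W_n)\to S(W_n)$. Iterating gives equivariant maps $E_N\mathfrak S_n = \mathfrak S_n^{*N}\to S(W_n)$ for every $N$. Finally, any compact Hausdorff $X$ with a free $\mathfrak S_n$-action maps equivariantly to some $E_N\mathfrak S_n$ (the classifying map $X/\mathfrak S_n\to B\mathfrak S_n$ has compact image, hence lands in a finite skeleton), and composition finishes. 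This is the same underlying mechanism---the non-equivariant null-homotopy of $g$ lets one extend---but packaged so that no higher-obstruction bookkeeping or coherence across approximations is required.
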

\begin{proof}
Consider the orthogonal decomposition $\mathbb R^n = D_n\oplus W_n$ and the unit sphere $S(W_n)$ in the $(n-1)$-dimensional space $W_n$. It is possible to map the simplex $\Delta^{n-1}$ to $W_n$ equivariantly, just subtracting $1/n$ from every barycentric coordinate, then the radial projection from the origin will identify $\partial \Delta^{n-1}$ with $S(W_n)$ equivariantly.

Theorem \ref{theorem:n-odd} in these terms says that there exists an equivariant map $S(W_n)\to S(W_n)$ of mapping degree $0$. It remains to use Lemma \ref{lemma:zero-degree} below. This lemma gives a $\mathfrak S_n$-equivariant map $X\to S(W_n)$. Composing it with the inclusion $S(W_n)\subset\mathbb R^n$ we obtain an equivariant map from $X$ to $\mathbb R^n$ not touching the diagonal of $\mathbb R^n$.
\end{proof}

Now we present the lemma that (together with its proof) was communicated to us by Alexey Volovikov. It is a particular case of \cite[Lemma~3.9]{bartsch1993}\footnote{$S$ is a $G$-CW complex since it is a sphere of a linear representation of $G$ and hence can be $G$-equivariantly triangulated.}, but we present a short proof of the particular case we need here for completeness.

\begin{lemma}
\label{lemma:zero-degree}
Let $G$ be a finite group and $S$ be a sphere with an action of $G$. If there exists an equivariant map $f : S\to S$ of zero degree then any Hausdorff compactum $X$ with a free action of $G$ has an equivariant map $X\to S$.
\end{lemma}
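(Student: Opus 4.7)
The plan is to prove Lemma \ref{lemma:zero-degree} via equivariant obstruction theory. Write $n=\dim S$, so $S=S^n$ is $(n-1)$-connected.

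First I would reduce from the Hausdorff compactum $X$ to a finite-dimensional free $G$-CW model. Since $G$ is finite, $X/G$ is compact Hausdorff and $X\to X/G$ is a principal $G$-bundle. Using a classifying map together with compactness, one obtains an equivariant map $X\to Y$ into a finite free $G$-CW complex $Y$, for instance a finite skeleton $Y=G^{*(m+1)}$ of $EG$; it then suffices to produce an equivariant map $Y\to S$.

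Next I would construct the map $Y\to S$ by skeletal induction. Since $S$ is $(n-1)$-connected, the obstructions to extending an equivariant map across the $k$-skeleton vanish for $k\le n$, so an initial equivariant map $\phi_0\colon Y^{(n)}\to S$ is built cell-by-cell by choosing values on one representative of each free $G$-orbit of cells and propagating by the $G$-action. The hypothesis on $f$ enters at the next stage: replacing $\phi_0$ by the still-equivariant map $f\circ\phi_0$, the primary obstruction cocycle on an $(n+1)$-cell $\sigma$ evaluates to $[f\circ\phi_0|_{\partial\sigma}]=f_*[\phi_0|_{\partial\sigma}]\in\pi_n(S)$, which is zero because $f_*$ is multiplication by $\deg f=0$. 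Hence $f\circ\phi_0$ extends equivariantly to $Y^{(n+1)}$. Iterating the procedure --- composing the current map with $f$ before each subsequent extension step --- kills all higher obstructions as well, since $f$ is non-equivariantly null-homotopic (being of degree zero on a sphere) and therefore $f_*=0$ on every $\pi_k(S)$ with $k\ge n$. Finite dimensionality of $Y$ makes the induction terminate, yielding the desired equivariant map $Y\to S$ and, by composition, the equivariant map $X\to S$.

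The main obstacle is the reduction from an arbitrary Hausdorff compactum to a finite free $G$-CW model; the reference \cite{bartsch1993} handles this by \v{C}ech-type methods suited to compact Hausdorff spaces and principal bundles over them. A subtler point is the compatibility of the equivariant extensions along $G$-orbits of cells, but this follows directly from freeness of the $G$-action on $Y$: an extension chosen on a single representative of each orbit propagates uniquely and equivariantly over the rest of the orbit.
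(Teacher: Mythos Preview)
Your argument is correct. Both your proof and the paper's share the same two structural ingredients: reducing the compactum $X$ to a finite join $E_NG=G^{*N}$ via the classifying map, and exploiting that a degree-zero self-map of a sphere is null-homotopic. The execution differs, however. The paper builds the equivariant map $E_NG\to S$ by an explicit, elementary join construction: since $f$ is null-homotopic it extends over a single cone on $S$, and equivariance then forces an extension over all of $G*S$; iterating the join with $G$ yields $G^{*N}*S\to S$, and restricting to the subspace $G^{*N}$ gives the desired map. Your route instead invokes equivariant obstruction theory on the skeleta of $G^{*N}$, killing each obstruction class by precomposing with $f$ (whose induced map on every $\pi_k(S)$ is zero). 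The paper's argument is shorter and avoids any obstruction-theory machinery, while yours makes transparent exactly where the degree-zero hypothesis is consumed at each dimension and would generalize more readily to targets other than spheres.
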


\begin{proof}
A zero degree map of spheres $S\to S$ is null-homotopic and can be continuously extended to a cone over the sphere $S$. Consider the join $G * S$ as a union of $|G|$ such cones glued together along their bases and extend the map from one cone to all other cones by equivariance with respect to the diagonal action of $G$ on the join, obtaining an equivariant map $g : G*S \to S$. Then take joins of $g$ with identity maps of $G$ and compose them to extend the chain of equivariant maps
\[
\cdots \to G * G * G * S \to G * G * S \to G * S \to S.
\]
Since every component of the join embeds into the join, we may drop $S$ in the domain and eventually have an equivariant map as a composition:
\[
\underbrace{G * G * \dots * G}_N \to \underbrace{G * G * \dots * G}_N * S\to S
\]
for any $N$. 

The join in the domain of the last map is the $(N-2)$-connected $(N-1)$-dimensional approximation $E_N G$ to the classifying space $EG$ of the group $G$. By standard properties of the classifying spaces it follows that, given a Hausdorff compactum $X$ with a free action of $G$, there exists an equivariant map $X\to E_N G$ for sufficiently large $N$, hence there exists an equivariant map $X\to S$ as a composition of $X\to E_N G \to S$.
\end{proof}

\begin{remark}
The assumption on compactness of $X$ in Theorem \ref{theorem:odd-non-pp} is not very restrictive in practical situations, since in most cases the non-compact configurations spaces for fair partition problem are $\mathfrak S_n$-equivariantly homotopy equivalent to their compact models, as it happened in \cite[Theorem~3.13]{bz2014}, for example.
\end{remark}

\subsection{Some remarks and consequences}

We first answer some doubts expressed by an unknown referee of \cite{aks2019} about the novelty of Theorem \ref{theorem:odd-non-pp}. Our Theorem \ref{theorem:odd-non-pp} is similar to, but is not a particular case of \cite[Theorem 3.6]{bartsch1993}. Indeed, \cite[Theorem 3.6]{bartsch1993} takes a group $G$ from a certain class and proves that there exists \emph{some} representation $W$ of $G$, for which there exists a $G$-equivariant map $X \to S(W)$ from any fixed point free $G$-space $X$. In Theorem \ref{theorem:odd-non-pp}, by contrast, we prove that for a specific group $G=\mathfrak S_n$ and a \emph{specific} representation sphere of $G$, $S(W)$, there exists a $G$-equivariant map $X\to S(W)$ from any free $G$-space $X$. The group $G=\mathfrak S_n$ does not satisfy the hypothesis of \cite[Theorem 3.6]{bartsch1993}, because it contains a subgroup (the alternating group) of prime index. The discussion in \cite[the paragraph after Theorem 3.6]{bartsch1993} also hints that our specific $W$ cannot be the one constructed in the proof of \cite[Theorem 3.6]{bartsch1993}, since our $W$ has the property $W^H=0$ whenever a subgroup $H\subset \mathfrak S_n$ acts transitively on the indices $1,\ldots, n$.

Now we briefly outline the open problems and the work that appeared after this paper was published as an Arxiv preprint. Theorems \ref{theorem:equivariant-gale} and \ref{theorem:odd-non-pp} leave the question ``For which $n$ is it possible to have a $\mathfrak S_n$-equivariant map $S(W_n)\to S(W_n)$ of zero degree?'' open in the case when $n$ is not a prime power and is even. The final resolution of this question requires more technicalities and is done in the separate paper \cite{avku2019}.

In terms of the works \cite{ahk2014,bz2014}, the theorems of this section show that the direct approach to fair partition problems does not only fail in terms of the primary cohomology obstruction, but also in terms of higher obstructions, when $n$ is odd and not a prime power. This approach (and its appropriate generalizations) has some particular consequences for the topological Tverberg problem (or, more generally, to van Kampen--Flores-type problems), which are given in \cite{aks2019}.

Theorem \ref{theorem:odd-non-pp} also provides counterexamples to a class of envy-free division problems, where labeled partitions in $n$ nonempty parts ($n$ is odd and not a prime power) are parametrized by a compact polyhedron $X$, on which $\mathfrak S_n$ acts by permutations of the labels, and the preferences of the $n$ players do not depend on the labels. A counterexample is obtained by taking equivariant $f : X\to S(W_n)$ and assigning the preference of any player to the parts with corresponding maximal coordinate $f_i$ of the map; the situation when every part is preferred by some player is then impossible. For a more detailed exposition of this idea, see \cite[Section~2]{avku2019}.

\bibliography{../Bib/karasev}

\bibliographystyle{abbrv}
\end{document}